\documentclass[11pt]{article}

\usepackage[a4paper,margin=1in]{geometry}
\usepackage{amsmath,amssymb,amsfonts,amsthm,mathtools}
\usepackage{mathrsfs}
\usepackage{enumerate}
\usepackage{enumitem}
\usepackage{graphicx}
\usepackage{hyperref}
\usepackage{xcolor}
\usepackage{tikz-cd}

\hypersetup{
  colorlinks=true,
  linkcolor=blue,
  citecolor=blue,
  urlcolor=blue
}

\numberwithin{equation}{section}

\newtheorem{theorem}{Theorem}[section]
\newtheorem{proposition}[theorem]{Proposition}
\newtheorem{lemma}[theorem]{Lemma}

\theoremstyle{definition}
\newtheorem{definition}[theorem]{Definition}

\newtheorem{remark}[theorem]{Remark}

\title{A HOMFLYPT-type invariant for pseudo links via a resolution in Hecke algebras}
\author{
Ioannis Diamantis\\
\small Department of Data Analytics and Digitalisation, School of Business and Economics\\
\small Maastricht University, Maastricht, The Netherlands\\
\small \texttt{i.diamantis@maastrichtuniversity.nl}
}

\date{}

\begin{document}

\maketitle

\begingroup
\renewcommand\thefootnote{}
\footnotetext{
\textbf{MSC 2020:}
57K10, 20F36, 57K12, 57K14.

\textbf{Keywords:} pseudo links, pseudo knots, pseudo braids, pseudo Hecke algebra, HOMFLYPT-type invariant, Markov trace, skein relations, state-sum formulation.}
\endgroup

\begin{abstract}
Pseudo links generalize classical links by allowing crossings with missing over/under information, called pre-crossings. While the pseudo braid framework provides an algebraic description of pseudo links via a Markov-type theorem, the construction of polynomial invariants using Hecke algebra techniques is obstructed by the presence of the pseudo Reidemeister 1 move. In this paper, we construct a HOMFLYPT-type invariant for oriented pseudo links via the pseudo Hecke algebra of type \(A\). The construction is based on a resolution homomorphism that maps each pseudo generator to a linear combination of a braid generator and its inverse, interpreting pre-crossings as algebraic superpositions of classical crossings. Composing this map with the Ocneanu trace and applying a suitable normalization yields an invariant satisfying a natural pseudo skein relation. We further show that the invariant admits a state-sum formulation as a weighted sum of classical HOMFLYPT-type invariants over all classical resolutions of the pseudo crossings, as well as a skein-theoretic characterization in terms of its values on classical links and the pseudo skein relation.
\end{abstract}


\section{Introduction}\label{intro}

Pseudo links arise as a natural generalization of classical links by allowing crossings with missing over/under information. These \emph{pre-crossings} encode situations in which the local structure of a link diagram is only partially known. The notion was first introduced by Hanaki \cite{Hanaki2010} and further developed in \cite{Henrich2013}, where pseudo knots are studied as equivalence classes of diagrams under generalized Reidemeister moves. Such structures have found applications, for instance, in the study of DNA configurations, where experimental limitations may prevent the distinction between positive and negative crossings.

The theory of pseudo knots and links has been developed through both diagrammatic and braid-theoretic approaches. In particular, the introduction of the pseudo braid monoid \cite{BardakovJablanWang2016} and the corresponding Markov theorem \cite{BardakovJablanWang2016, D} provide a complete algebraic framework for the study of pseudo links via braids, extending classical results of Alexander \cite{Alexander1923} and Markov \cite{Markov1936}. This framework parallels the classical correspondence between links and braids \cite{Birman1993,Lambropoulou2007}, while incorporating additional moves reflecting the presence of pre-crossings.

A fundamental problem in knot theory is the construction of link invariants. For classical links, Hecke algebras and Markov traces provide a powerful method for producing polynomial invariants, most notably the HOMFLYPT polynomial, following the seminal work of Jones \cite{Jones1987}, where a trace on the Hecke algebra, originally discovered by Ocneanu, is used to construct the HOMFLYPT polynomial. This approach has been extended to singular links via the singular braid monoid and the singular Hecke algebra \cite{Baez1992,ParisRabenda2008}, where additional generators correspond to singular crossings and lead to invariants satisfying desingularization relations. Related algebraic structures have also been developed in other generalized settings; for instance, the bonded braid monoid introduced in \cite{DKL} exhibits a rich algebraic structure closely connected to both singular and pseudo braid monoids.

Despite the close relationship between pseudo links and singular links, the Hecke algebra approach does not directly extend to the pseudo setting. Although the pseudo braid monoid is algebraically isomorphic to the singular braid monoid \cite{BardakovJablanWang2016}, the underlying topological theories differ in a crucial way: pseudo links admit the pseudo Reidemeister 1 move, which has no analogue in singular knot theory. This discrepancy introduces an additional pseudo-stabilization move in the Markov theorem \cite{D, DiamantisPseudoST}, and prevents standard trace constructions from yielding invariants of pseudo links without modification.

In this paper, we construct a HOMFLYPT-type invariant for pseudo links via a Hecke algebra framework. The key idea is to interpret each pre-crossing as a linear combination of a positive and a negative crossing. Algebraically, this is achieved by defining a homomorphism from the pseudo Hecke algebra to the classical Hecke algebra, sending each pseudo generator to a linear combination of the corresponding braid generator and its inverse. Composing this map with the Ocneanu trace yields a trace-like functional on pseudo braids. Conceptually, this construction treats pre-crossings as algebraic superpositions of classical crossings, in a manner analogous to tangle insertion techniques \cite{HenrichKauffman2017}, allowing the pseudo structure to be incorporated directly into the Hecke algebra setting.

A suitable normalization is then introduced to account for the pseudo-stabilization move, leading to an invariant \(\mathcal P\) of pseudo links that extends the classical HOMFLYPT polynomial. At the algebraic level, pre-crossings are resolved according to the relation
\[
L_p = X L_+ + Y L_-,
\]
and this leads, after normalization, to a pseudo skein relation for the invariant \(\mathcal P\). This relation contrasts with the desingularization relations appearing in the singular setting \cite{ParisRabenda2008}, and highlights the distinct topological nature of pseudo links.

In addition to its algebraic construction, the invariant admits a state-sum formulation as a weighted sum over classical resolutions of pseudo crossings, in the spirit of state-sum models such as the Kauffman bracket \cite{Kauffman1988}, as well as a skein-theoretic characterization. These equivalent descriptions provide complementary algebraic and diagrammatic perspectives on the invariant.

\medskip 

The paper is organized as follows. In Section~\ref{preliminaries} we recall the necessary background on pseudo links, pseudo braids, and Hecke algebras. In Section~\ref{pseudoheck} we introduce the pseudo Hecke algebra and define the resolution homomorphism. Section~\ref{trace} is devoted to the construction of the induced trace and the proof of the main theorem. In Section~\ref{skein} we derive the skein relations satisfied by the invariant, including the pseudo skein relation. In Section~\ref{statesum} we develop a state-sum interpretation and a skein-theoretic characterization of the invariant. Finally, in Section~\ref{concl} we discuss further directions, including extensions to 3-manifolds and related algebraic structures.


\section{Preliminaries}\label{preliminaries}

In this section we recall the necessary background on pseudo links, pseudo braids and singular Hecke algebras from \cite{Henrich2013,BardakovJablanWang2016,ParisRabenda2008}.

\subsection{Pseudo links and pseudo knots}

Pseudo diagrams were introduced by Hanaki \cite{Hanaki2010} as
generalizations of classical link diagrams in which some crossings
lack over/under information. These \emph{pre-crossings} may be interpreted as representing uncertainty between the two possible classical crossings (for an illustration see Figure~\ref{pseudoknot}).

\begin{figure}[ht]
\centering
\includegraphics[scale=1]{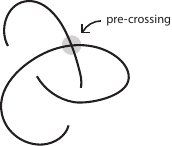}
\caption{An example of a pseudo knot diagram containing one pre-crossing.}
\label{pseudoknot}
\end{figure}

\begin{definition}
A \emph{pseudo link diagram} is a regular link diagram in which some crossings are replaced by \emph{pre-crossings}, that is, transverse double points with no specified over/under information.

Two pseudo link diagrams are said to be \emph{equivalent} if they are related by a finite sequence of generalized Reidemeister moves, consisting of the classical Reidemeister moves together with the pseudo Reidemeister moves \cite{Henrich2013}, illustrated in Figure~\ref{Reidemeister}.

A \emph{pseudo link} is an equivalence class of pseudo link diagrams under these moves.
\end{definition}

\begin{figure}[ht]
\centering
\includegraphics[scale=0.75]{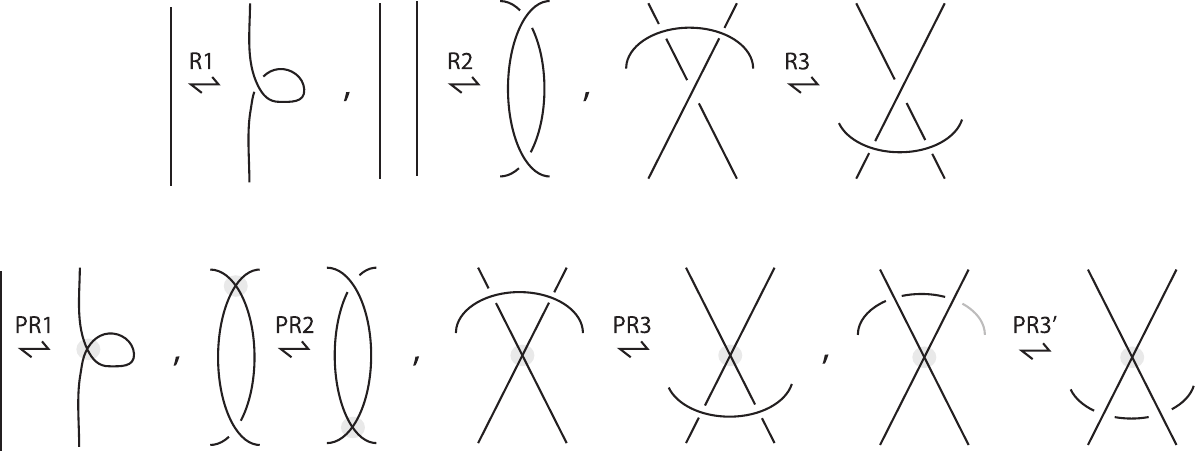}
\caption{Reidemeister Moves for Pseudo Knots.}
\label{Reidemeister}
\end{figure}

\begin{remark}
Pseudo knots are closely related to singular knots, that is, knots with finitely many transverse double points regarded as rigid vertices. Indeed, there is a natural correspondence between singular diagrams and pseudo diagrams obtained by replacing each singular crossing with a pre-crossing \cite{BardakovJablanWang2016, DiamantisPseudoST}. However, this correspondence is not an equivalence of theories. In particular, the pseudo Reidemeister 1 move, which allows the creation or removal of a loop with a pre-crossing, has no analogue in the
singular setting. This distinction plays a crucial role in the construction of invariants.
\end{remark}


\subsection{Pseudo braids and the Markov theorem}

The braid-theoretic approach to pseudo links is based on the
\emph{pseudo braid monoid}, introduced in \cite{BardakovJablanWang2016}. This construction extends the classical braid group by allowing braid diagrams with pre-crossings.

\begin{definition}\label{def:pbrmon}
The \emph{pseudo braid monoid} \(PM_n\) is the monoid generated by
\[
\sigma_1^{\pm1},\ldots,\sigma_{n-1}^{\pm1},
\qquad
p_1,\ldots,p_{n-1},
\]
where the generators \(\sigma_i^{\pm1}\) are the classical braid
generators and the generators \(p_i\) represent pre-crossings between the \(i\)-th and \((i+1)\)-st strands (for an illustration see Figure~\ref{gen}).

The defining relations are the classical braid relations among the
\(\sigma_i\)'s, together with the pseudo braid relations
\[
p_i p_j = p_j p_i, \qquad |i-j|\geq 2,
\]
\[
p_i \sigma_j^{\pm1} = \sigma_j^{\pm1} p_i, \qquad |i-j|\geq 2,
\]
\[
p_i \sigma_i^{\pm1} = \sigma_i^{\pm1} p_i,
\]
\[
\sigma_i \sigma_{i+1}p_i = p_{i+1}\sigma_i\sigma_{i+1},
\]
and
\[
\sigma_{i+1}\sigma_i p_{i+1}=p_i\sigma_{i+1}\sigma_i.
\]
\end{definition}

\begin{figure}[ht]
\centering
\includegraphics[scale=0.8]{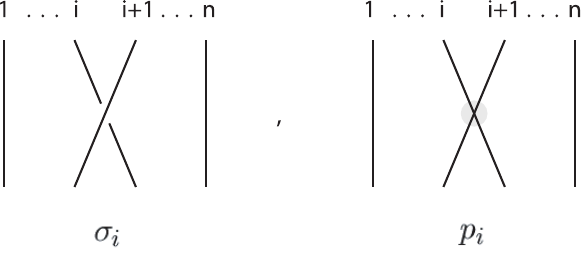}
\caption{The generators of \(PM_n\): classical crossings \(\sigma_i^{\pm1}\) and pre-crossings \(p_i\).}
\label{gen}
\end{figure}

\begin{remark}
The relations above are formally identical to the defining relations of the singular braid monoid after replacing each singular generator \(\tau_i\) by the pseudo generator \(p_i\). Consequently, there is an isomorphism
\[
PM_n \cong SM_n,
\qquad
\sigma_i^{\pm1}\mapsto \sigma_i^{\pm1},
\qquad
p_i\mapsto \tau_i,
\]
where \(SM_n\) denotes the singular braid monoid
\cite{BardakovJablanWang2016}. This isomorphism is algebraic; it does not identify the corresponding topological theories, because, as noted before, pseudo links admit the pseudo Reidemeister 1 move.
\end{remark}

As in the classical case, the closure of a pseudo braid is obtained by joining corresponding top and bottom endpoints (see Figure~\ref{closure}). This produces an oriented pseudo link. More precisely, we have:

\begin{definition}
The \emph{closure} of a pseudo braid \(\alpha \in PM_n\), denoted
\(\widehat{\alpha}\), is the pseudo link obtained by joining the
corresponding top and bottom endpoints of the braid diagram.
\end{definition}

\begin{figure}[ht]
\centering
\includegraphics[scale=0.8]{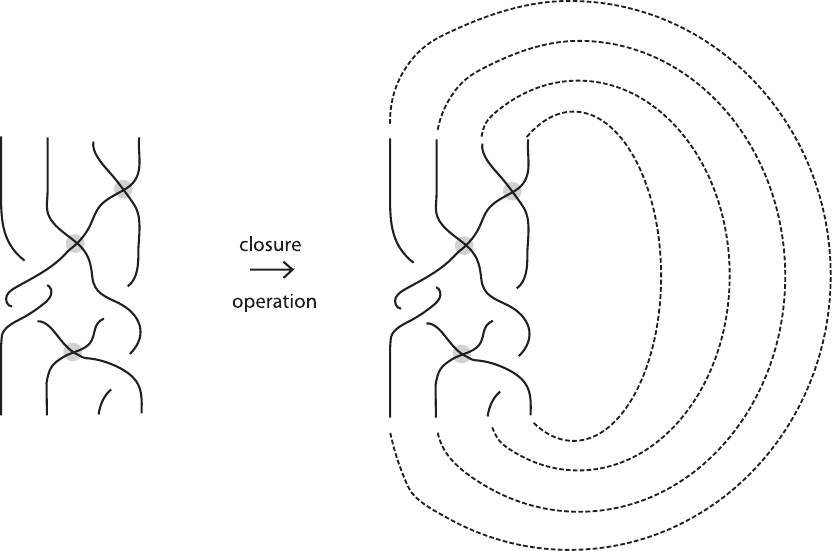}
\caption{A pseudo braid and its closure.}
\label{closure}
\end{figure}

This construction allows one to pass from pseudo braids to pseudo links, providing the bridge between the algebraic braid setting and the topological theory of pseudo links.

\begin{theorem}[Alexander theorem for pseudo links {\cite{BardakovJablanWang2016, D}}]
Every oriented pseudo link is isotopic to the closure of a pseudo braid.
\end{theorem}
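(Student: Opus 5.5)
The plan is to adapt the standard proof of Alexander's theorem for classical links, either Vogel's algorithm on Seifert circles or the braiding algorithm of \cite{Lambropoulou2007}, and to treat each pre-crossing as a rigid local configuration that is carried along by the isotopies. Start from an oriented pseudo link diagram $D$. By a planar isotopy, shrink a small disc around each pre-crossing so that, inside the disc, the two arcs look like a standard crossing with orientations pointing in the same general direction. This is the same local picture as the generator $p_i$ of $PM_n$ (Figure~\ref{gen}). Everything outside these discs is an ordinary oriented diagram made of arcs and classical crossings.

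The main steps are as follows.

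\emph{Step 1.} Fix a braid axis, that is, a point $O$ in the plane. Subdivide the arcs of $D$ into small pieces, each of which is either monotone with respect to $O$ (running counterclockwise around it) or an \emph{opposite} piece.

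\emph{Step 2.} Remove each opposite piece by the usual braiding move, replacing it by an arc that passes over (or under) the rest of the diagram around $O$. This move involves only classical Reidemeister II and III moves with ordinary strands.

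\emph{Step 3.} Arrange that no opposite piece lies inside a pre-crossing disc. Because the discs are small, we can choose the subdivision so that each pre-crossing disc lies entirely within a monotone region. If some pre-crossing is oriented so that it is not monotone, first rotate its disc by a planar isotopy. The new arcs produced this way are classical and can then be braided in Step 2.

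\emph{Step 4.} When an arc being moved in Step 2 sweeps across a pre-crossing disc, the required local moves are a Reidemeister III move with one pre-crossing and two classical crossings, and Reidemeister II moves between classical strands. Both are among the pseudo Reidemeister moves of Figure~\ref{Reidemeister}. Since all added arcs pass entirely over (or under) the diagram, no move of the form ``pre-crossing against pre-crossing'' is needed.

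\emph{Step 5.} After finitely many steps every piece is monotone, so the diagram is a closed braid. Cutting it along a ray from $O$ that avoids all crossings and pre-crossings gives a word in the $\sigma_i^{\pm1}$ and $p_i$, which is an element $\alpha\in PM_n$ with $\widehat{\alpha}$ isotopic to $D$.

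The main obstacle is Step 4. We must check that sliding an over-arc (or under-arc) across a pre-crossing is a valid pseudo move, and that this mixed Reidemeister III move is available in all the orientation variants that occur. I would verify it directly from the list of pseudo Reidemeister moves in \cite{Henrich2013}, and, if needed, derive the missing orientation variants from one of them together with classical Reidemeister II moves.

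An alternative route avoids this check. Using the correspondence between pseudo diagrams and singular diagrams (replace each pre-crossing by a singular crossing), apply the Alexander theorem for singular links. Every isotopy it uses is a rigid-vertex isotopy, and each such move translates into a pseudo move. Reading the resulting singular braid back via $PM_n\cong SM_n$ then gives the required pseudo braid.
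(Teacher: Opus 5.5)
Your argument is correct. The paper gives no proof of this statement; it only cites \cite{BardakovJablanWang2016, D}, and both of your routes are the standard arguments behind those citations. The first is direct braiding with each pre-crossing first placed in braided position. The second replaces pre-crossings by singular crossings, braids by the singular Alexander theorem, and reads the result back through $PM_n\cong SM_n$. This works because every rigid-vertex move has a pseudo counterpart, including the vertex twist, which appears as $p_i\sigma_i^{\pm1}=\sigma_i^{\pm1}p_i$.

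In the direct route, the only genuinely pseudo-specific point is your Step~3, and it is sound. At a transverse double point with unit tangents $v_1,v_2$ one has $(v_1+v_2)\cdot v_j=1+v_1\cdot v_2>0$ for $j=1,2$. Hence a planar rotation of a small disc makes both strands run counterclockwise about $O$, and after that every opposite piece is purely classical.

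The orientation worry you raise for Step~4 is not a real obstacle, because the pseudo Reidemeister moves are unoriented local moves and all orientations are allowed. The one detail you should state explicitly is the classical one: subdivide the opposite pieces so that each carries only over-crossings or only under-crossings before you sweep it across $O$.
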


This result shows that the study of pseudo links can be reduced to the study of pseudo braids, mirroring the classical braid–link correspondence. The equivalence of closed pseudo braids is described by a Markov-type theorem. We state the form that will be used in this paper.

\begin{theorem}[Markov theorem for pseudo braids {\cite{BardakovJablanWang2016}}]
Two pseudo braids have isotopic closures if and only if they are related by a finite sequence of pseudo braid relations and the following moves (see Figure~\ref{Mar}):
\begin{enumerate}
    \item \emph{conjugation}
    \[
    \alpha \sim \beta^{-1}\alpha\beta,
    \]
    where \(\alpha\in PM_n\) and \(\beta\in B_n\);

    \item \emph{commuting}
    \[
    \alpha\beta \sim \beta\alpha,
    \]
    for \(\alpha,\beta\in PM_n\);

    \item \emph{classical stabilization and destabilization}
    \[
    \alpha \sim \alpha\sigma_n^{\pm1};
    \]

    \item \emph{pseudo-stabilization}
    \[
    \alpha \sim \alpha p_n.
    \]
\end{enumerate}
\end{theorem}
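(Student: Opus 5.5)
The proof follows the classical strategy for Markov-type theorems: a geometric braiding argument yields the moves, and a local check of each move gives the converse. I would argue the two directions separately.

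\textbf{The ``if'' direction.} Here one checks that each listed move preserves the isotopy class of the closure.
\begin{itemize}
\item The pseudo braid relations of Definition~\ref{def:pbrmon} are realised by planar isotopy and by the classical and pseudo Reidemeister 2 and 3 moves (Figure~\ref{Reidemeister}) performed inside the braid box. For instance, \(p_i\sigma_i=\sigma_ip_i\) is the pseudo Reidemeister 2 type move, and \(\sigma_i\sigma_{i+1}p_i=p_{i+1}\sigma_i\sigma_{i+1}\) is a pseudo Reidemeister 3 move.
\item Conjugation and commuting do not change the closure up to planar isotopy: one slides \(\beta\) (respectively \(\alpha\)) around the closure strands.
\item Classical stabilization is a Reidemeister 1 move on the closed diagram.
\item Pseudo-stabilization \(\alpha\sim\alpha p_n\) closes the new strand into a kink containing one pre-crossing, which is removed by the pseudo Reidemeister 1 move.
\end{itemize}
This direction is routine.

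\textbf{The ``only if'' direction.} I would adapt the L-move approach of Lambropoulou~\cite{Lambropoulou2007}, which is also the approach taken in \cite{D}.

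First, establish a one-move (L-move) version. Fix a braiding algorithm for pseudo diagrams: pre-crossings are treated as rigid local pieces that are kept in small neighbourhoods and moved along with the arcs, and up-arcs are turned into braid strands by L-moves. Show that the resulting closed pseudo braid is well defined up to the moves listed plus L-moves, independently of the choices made: subdivision points, the order of the braiding, and the position of the pre-crossings relative to the axis.

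Second, show that every generalized Reidemeister move between two diagrams can be realised, after braiding, by these same moves. Classical Reidemeister 2 and 3 moves, and the pseudo versions involving a pre-crossing, can be isotoped into braid position, where they become the relations of Definition~\ref{def:pbrmon} (Figure~\ref{Reidemeister}) or conjugations. The classical Reidemeister 1 move gives \(\sigma_n^{\pm1}\)-stabilization. The pseudo Reidemeister 1 move, once braided, gives exactly \(\alpha\sim\alpha p_n\), possibly after conjugation.

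Third, translate L-moves into stabilizations and conjugations, as in the classical case. The key point is that an L-move on an arc near a pre-crossing can be pushed past it using the mixed relations \(p_i\sigma_j^{\pm1}=\sigma_j^{\pm1}p_i\) and the pseudo Reidemeister 3 relations.

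\textbf{Commuting versus conjugation.} Commuting \(\alpha\beta\sim\beta\alpha\) with \(\beta\in PM_n\) is needed separately from conjugation, because \(p_i\) has no inverse. One cannot conjugate by a pre-crossing, yet cyclically moving a pre-crossing around the closure is an isotopy. So whenever the algorithm's choices differ by where the closure is ``cut'' relative to a pre-crossing, the commuting move is what relates the outputs.

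\textbf{Main obstacle.} I expect the hardest step to be independence of the braiding from choices when pre-crossings interact with the up-arcs being L-moved. For example, one must handle the case where a pre-crossing lies on an up-arc that has to be subdivided. I would first try to isotope every pre-crossing slightly so that both its arcs are down-arcs; this is possible by small rotations. I would then show that the two ways of doing this differ by a pseudo Reidemeister 2 or 3 move together with commuting. If this fails, I would instead use the isomorphism \(PM_n\cong SM_n\) to import the singular Markov theorem, which accounts for every move except pseudo Reidemeister 1. The only remaining task would then be to show that pseudo Reidemeister 1 corresponds to pseudo-stabilization.
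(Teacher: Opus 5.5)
The paper gives no proof of this statement. It is quoted from \cite{BardakovJablanWang2016}, with the $L$-move sharpening credited to \cite{D}, so there is no internal argument to compare yours against. Your outline is sound and follows the two routes available in the literature.

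Your main route is the $L$-move approach of \cite{D}, modelled on \cite{Lambropoulou2007}. It yields a stronger, self-contained one-move theorem, but nearly all of its content lies in the independence-of-choices step you flag and do not carry out. Your fallback is much shorter and matches the paper's own framing. Under $p_i\leftrightarrow\tau_i$, the pseudo Reidemeister moves other than PR1 are exactly the singular ones. Gemein's Markov theorem for singular braids (commuting plus $\sigma_n^{\pm1}$-stabilization) then transfers verbatim through $PM_n\cong SM_n$, and only PR1 is left.

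In that last step, your phrase ``possibly after conjugation'' is not quite right.
\begin{itemize}
\item A PR1 curl turning in the braiding direction, placed on the last strand, can be enlarged around the axis. It is then literally the closure of $\alpha p_n$.
\item A curl turning the other way cannot be handled like this. Braiding it by an $L$-move gives, up to commuting, a word such as $\alpha\,\sigma_n^{-\varepsilon}p_n\sigma_n^{\varepsilon}$.
\item Conjugating by $\sigma_n^{\pm1}$ only moves the conjugating pair from $p_n$ onto $\alpha$. What actually removes it is the defining relation $p_n\sigma_n^{\pm1}=\sigma_n^{\pm1}p_n$, i.e.\ the PR2-type move.
\end{itemize}
So that sentence should read ``after a pseudo braid relation''. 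A smaller point: the rotation that makes both arcs of a pre-crossing descend is a local planar isotopy, but it need not be small.
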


\begin{figure}[ht]
\centering
\includegraphics[scale=0.9]{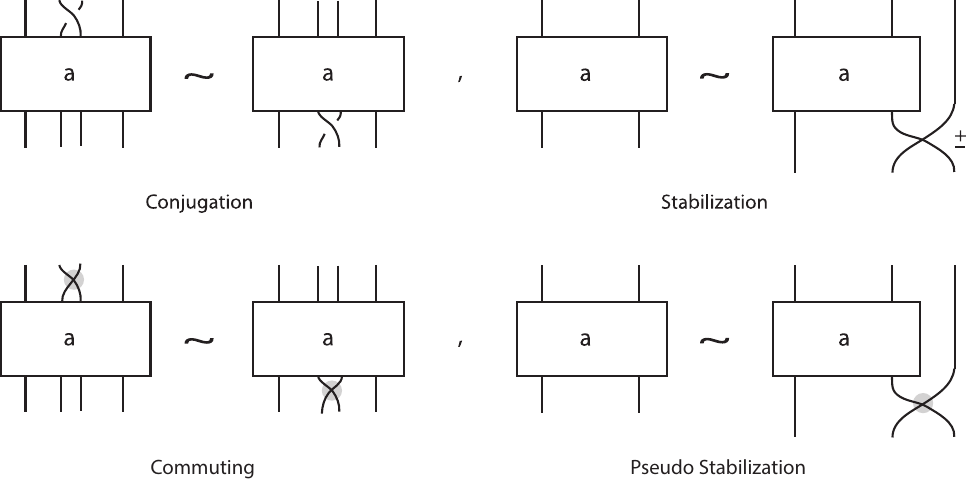}
\caption{Conjugation, Stabilzation, Commuting and Pseudo Stabilization moves.}
\label{Mar}
\end{figure}

\begin{remark}
A sharpened version of the Markov theorem for pseudo links, formulated in terms of $L$-moves, was introduced in \cite{D}. This approach parallels the classical $L$-move framework \cite{Lambropoulou2007} and provides a more geometric description of braid equivalence.
\end{remark}

The pseudo-stabilization move is the braid-theoretic counterpart of the pseudo Reidemeister 1 move. This feature distinguishes the pseudo setting from the singular case and constitutes the main obstruction to extending standard Hecke algebra constructions. In particular, classical Markov traces are not compatible with pseudo-stabilization, and any invariant of pseudo links must incorporate an additional normalization depending on the number of pseudo crossings (see \cite{DiamantisPseudoST}).

This phenomenon will be reflected algebraically in Section~\ref{pseudoheck}, where the structure of \(PM_n\) leads to the definition of a pseudo Hecke algebra and necessitates a modification of the classical trace normalization.


\subsection{Singular Hecke algebras}

The construction of the HOMFLYPT polynomial for classical links is based on the existence of a Markov trace on the Iwahori--Hecke algebras of type \(A\), obtained as a quotient of the braid group algebra. This trace is compatible with the Markov moves and yields a link invariant via a suitable normalization \cite{Jones1987}. Extensions of this approach to singular links have been developed using the singular braid monoid and the corresponding singular Hecke algebra, where invariants satisfy desingularization relations \cite{ParisRabenda2008}. In this setting, the singular braid monoid provides the algebraic framework by extending the classical braid group with generators corresponding to singular crossings.

\begin{definition}
The \emph{singular braid monoid} \(SM_n\) is generated by the classical braid generators \(\sigma_i^{\pm1}\) together with singular generators \(\tau_i\), subject to the classical braid relations and additional relations involving the \(\tau_i\)'s (see \cite{Baez1992,ParisRabenda2008}).
\end{definition}

Passing to the monoid algebra and imposing quadratic Hecke relations on the classical generators leads to the corresponding singular Hecke algebra.

\begin{definition}
The \emph{singular Hecke algebra} \(SH_n(q)\) is defined as the quotient
\[
SH_n(q) = R[SM_n] \Big/ \langle g_i^2 - (q-1)g_i - q \rangle,
\]
where \(g_i\) denotes the image of \(\sigma_i\).
\end{definition}

In contrast to the classical case, the singular generators \(\tau_i\) do not satisfy quadratic relations. Instead, they interact with the generators \(g_i\) through relations inherited from the singular braid monoid. A key feature of singular link invariants is the presence of \emph{desingularization relations}, expressing a singular crossing as a linear combination of classical diagrams. More precisely, let \(L_\times, L_+, L_-\), and \(L_0\) denote link diagrams that are identical outside a small disk, and inside the disk differ by a singular crossing, a positive crossing, a negative crossing, and the oriented smoothing, respectively (see Figure~\ref{skeins}). We have:

\begin{theorem}[Paris--Rabenda {\cite{ParisRabenda2008}}]
There exist Markov traces on the tower of singular Hecke algebras
\[
\mathrm{tr}_n : SH_n(q) \to R, \quad n \geq 1,
\]
that extend the classical Ocneanu trace and give rise to HOMFLYPT-type invariants for singular links. These invariants satisfy a
desingularization relation of the form
\[
L_\times = X L_+ + Y L_0,
\]
where \(L_\times\) denotes a singular crossing, \(L_+\) a positive crossing, and \(L_0\) the smoothing.
\end{theorem}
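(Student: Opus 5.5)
The plan is to obtain the traces by transporting the classical Ocneanu trace along an algebra homomorphism that resolves each singular generator into classical ones. This avoids building a trace directly on a basis of \(SH_n(q)\). Let \(H_n(q)\) be the Iwahori--Hecke algebra with generators \(g_1,\dots,g_{n-1}\) subject to the braid relations and \(g_i^2=(q-1)g_i+q\). Fix parameters \(a,b\in R\), and define \(\rho_n : SH_n(q)\to H_n(q)\) by \(g_i\mapsto g_i\) and \(\tau_i\mapsto a\,g_i+b\cdot 1\).

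The first step is to check that \(\rho_n\) is well defined. This reduces to verifying the singular braid monoid relations on the images, since the quadratic relation on the \(g_i\) is preserved trivially.
\begin{itemize}
\item Far commutation \(\tau_i\tau_j=\tau_j\tau_i\) and \(\tau_i g_j=g_j\tau_i\) for \(|i-j|\ge 2\) hold because \(g_i\) and \(g_j\) commute.
\item The relation \(\tau_i g_i=g_i\tau_i\) holds because \(g_i\) commutes with \(a g_i+b\).
\item The mixed relation \(g_ig_{i+1}\tau_i=\tau_{i+1}g_ig_{i+1}\) expands to \(a\,g_ig_{i+1}g_i+b\,g_ig_{i+1} = a\,g_{i+1}g_ig_{i+1}+b\,g_ig_{i+1}\), which is exactly the braid relation. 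The relation \(g_{i+1}g_i\tau_{i+1}=\tau_i g_{i+1}g_i\) is handled symmetrically.
\end{itemize}
The maps \(\rho_n\) are compatible with the inclusions \(SH_n\hookrightarrow SH_{n+1}\) and \(H_n\hookrightarrow H_{n+1}\).

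Next define \(\mathrm{tr}_n=\tau\circ\rho_n\), where \(\tau\) is the Ocneanu trace on \(\bigcup_n H_n(q)\) with parameter \(z\). Since \(\rho_n\) restricts to the identity on \(H_n(q)\), this family extends the Ocneanu trace. The Markov properties are then inherited from those of \(\tau\):
\begin{itemize}
\item \(\mathrm{tr}(xy)=\mathrm{tr}(yx)\) follows from multiplicativity of \(\rho\) together with the trace property of \(\tau\).
\item \(\mathrm{tr}_{n+1}(x g_n)=z\,\mathrm{tr}_n(x)\) for \(x\in SH_n\) holds because \(\rho(x)\in H_n\).
\end{itemize}
The Markov theorem for singular braids has no stabilization by a singular generator, only conjugation, commuting and classical (de)stabilization. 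Consequently the usual Jones normalization applies unchanged: set \(\lambda\) by \(z=-(1-q)/(1-\lambda q)\), rescale by \((\sqrt\lambda)^{e}\), where \(e\) is the exponent sum in the \(\sigma_i\) only, and divide by \(\big(-(1-\lambda q)/(\sqrt\lambda(1-q))\big)^{n-1}\). This yields an isotopy invariant of singular links. The step to double-check here is that the exponent sum is invariant under the singular relations; it is, because each relation has the same number of \(\sigma\)'s on both sides.

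The desingularization relation follows by reading \(\tau_i\mapsto a g_i+b\) diagrammatically. Here \(g_i\) corresponds to \(L_+\), and the identity \(1\) corresponds to the oriented smoothing \(L_0\). The latter holds because, at the braid level, the smoothing of a crossing between strands \(i\) and \(i+1\) is just the trivial braid locally. Thus \(\mathrm{tr}(\alpha\tau_i\beta)=a\,\mathrm{tr}(\alpha g_i\beta)+b\,\mathrm{tr}(\alpha\beta)\). After normalization, \(L_+\) and \(L_0\) differ in exponent sum by one, so the factor \(\sqrt\lambda\) is absorbed into the coefficients. One obtains \(L_\times=X L_++Y L_0\) with \(X=a\) and \(Y=b\sqrt\lambda\), up to the chosen normalization convention.

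The main obstacle I expect is not the existence argument, which is essentially formal once \(\rho_n\) is seen to be a homomorphism. It is rather matching the precise form and generality of Paris--Rabenda's traces. Their traces may carry additional independent parameters on words containing several \(\tau\)'s, which the factorization through \(\rho_n\) cannot produce. If the full family is required, I would first try to construct the trace inductively on a normal form of \(SH_{n+1}(q)\) as a left \(SH_n(q)\)-module. The rules would be \(\mathrm{tr}(x g_n y)=z\,\mathrm{tr}(xy)\) and \(\mathrm{tr}(x\tau_n y)=w\,\mathrm{tr}(xy)\) for \(x,y\in SH_n\), and the main work would be verifying well-definedness against the mixed relations. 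For the statement as given, namely existence of extensions yielding a desingularization relation, the homomorphism construction suffices.
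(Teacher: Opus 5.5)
The paper gives no proof of this statement; it is quoted from Paris--Rabenda only for contrast. Your argument is a correct, self-contained proof of the statement as phrased here, namely the existence of Ocneanu-extending Markov traces whose invariants satisfy a relation $L_\times = XL_+ + YL_0$. It is exactly the singular counterpart of the paper's own construction in Sections~\ref{pseudoheck}--\ref{trace}. In $H_n(q)$ one has $Xg_i+Yg_i^{-1}=(X+Yq^{-1})g_i+Y(q^{-1}-1)$. So under $p_i\leftrightarrow\tau_i$, the paper's $\rho_{X,Y}$ is your $\rho_n$ with $a=X+Yq^{-1}$ and $b=Y(q^{-1}-1)$, and the two theories differ only in the normalization. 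As you note, the singular Markov theorem has no stabilization by $\tau_n$, so no analogue of the factor $C^{d(\alpha)}$ is needed.

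Paris--Rabenda instead work intrinsically on $SH_n(q)$ and describe the whole space of Markov traces. Your route is much shorter and gives the desingularization relation for free. However, it only produces traces that factor through a homomorphism to $H_n(q)$. A classification needs the intrinsic approach, with a normal form for $SH_n(q)$ and a well-definedness check, as you anticipate.

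Three details need fixing.

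\emph{Normalization.} The Jones normalization multiplies by $\bigl(-\frac{1-\lambda q}{\sqrt\lambda(1-q)}\bigr)^{n-1}=(z\sqrt\lambda)^{-(n-1)}$. If you divide by it, as you wrote, a positive stabilization contributes a factor $z^2\lambda$ instead of $1$.

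\emph{Coefficients.} With $e$ counting only the $\sigma_i$, the closure of $\alpha\sigma_i\beta$ has exponent sum one larger than that of $\alpha\tau_i\beta$, while $\alpha\beta$ has the same exponent sum. The relation therefore reads $L_\times=\frac{a}{\sqrt\lambda}L_+ + b\,L_0$, i.e.\ $X=a/\sqrt\lambda$ and $Y=b$. Your values $X=a$, $Y=b\sqrt\lambda$ correspond to counting each $\tau_i$ as $+1$ in $e$.

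\emph{From braids to diagrams.} To turn the braid identity into a relation between arbitrary diagrams, state that any singular diagram can be braided away from a small disk around a chosen singular crossing. Then that crossing becomes some $\tau_i$, and $L_+$, $L_0$ are the closures of $\alpha\sigma_i\beta$ and $\alpha\beta$. This is standard and is the same step the paper uses in Section~\ref{skein}.
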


\begin{figure}[ht]
\centering
\includegraphics[scale=1.1]{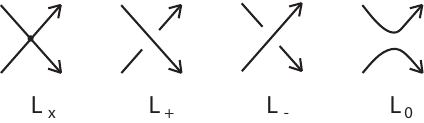}
\caption{From left to right: a singular crossing \(L_\times\), a positive crossing \(L_+\), a negative crossing \(L_-\), and the smoothing \(L_0\).}
\label{skeins}
\end{figure}

\begin{remark}
Although the singular braid monoid \(SM_n\) is algebraically isomorphic to the pseudo braid monoid \(PM_n\) via the correspondence \(\tau_i \leftrightarrow p_i\), the two theories differ at the topological level, due to the PR1 move. As a consequence, standard trace-based constructions do not directly yield invariants of pseudo links and require an additional normalization.

The construction in the present paper builds on this framework but departs from the singular case in a fundamental way. Instead of using desingularization relations, we resolve each pre-crossing into a linear combination of the two possible classical crossings. This leads to a pseudo skein relation of the form
\[
L_p = X L_+ + Y L_-,
\]
together with a normalization that ensures invariance under pseudo Markov moves.
\end{remark}


\section{The pseudo Hecke algebra and the resolution homomorphism}\label{pseudoheck}

In this section we recall the pseudo Hecke algebra of type \(A\), introduced in \cite{DiamantisPseudoST}, and construct a resolution homomorphism from this algebra to the classical Hecke algebra. This map provides the algebraic mechanism by which a pre-crossing is resolved into a linear combination of the two possible classical crossings.

\subsection{The pseudo Hecke algebra of type A}

Let \(PM_n\) denote the pseudo braid monoid on \(n\) strands, generated by classical braid generators \(\sigma_i^{\pm1}\) and pseudo generators \(p_i\) (recall Definition~\ref{def:pbrmon}), and let \(R\) be a commutative ring containing an invertible parameter \(q\). For example, one may take \(R=\mathbb{C}(q)\), or a suitable Laurent polynomial ring containing \(q^{\pm1}\).

\begin{definition}
The \emph{pseudo Hecke algebra of type \(A\)}, denoted \(PH_n(q)\), is the quotient algebra
\[
PH_n(q)
=
R[PM_n]\Big/
\left\langle
\sigma_i^2-(q-1)\sigma_i-q
\;\middle|\;
1\leq i<n
\right\rangle .
\]
We denote by \(g_i\) the image of the classical braid generator
\(\sigma_i\) in this quotient.
\end{definition}

Equivalently, \(PH_n(q)\) is generated by
\[
g_1,\ldots,g_{n-1},
\qquad
p_1,\ldots,p_{n-1},
\]
where the generators \(g_i\) satisfy the usual Hecke relations
\[
g_i^2=(q-1)g_i+q,
\qquad 1\leq i<n,
\]
together with the braid relations inherited from \(B_n\), while the pseudo generators \(p_i\) satisfy the pseudo braid relations inherited from \(PM_n\).

In particular, the pseudo generators do not satisfy any quadratic Hecke relation. This distinction is important: the classical generators become invertible in \(PH_n(q)\), since
\[
g_i^{-1}=q^{-1}g_i+(q^{-1}-1),
\]
whereas the pseudo generators \(p_i\) are not treated as invertible braid generators. This asymmetry reflects the fact that pre-crossings are not resolved at the level of the algebra.

As noted above, the pseudo braid monoid \(PM_n\) is isomorphic to the singular braid monoid \(SM_n\) under the correspondence
\[
\sigma_i^{\pm1}\longleftrightarrow \sigma_i^{\pm1},
\qquad
p_i\longleftrightarrow \tau_i,
\]
where \(\tau_i\) denotes the singular braid generator. Consequently, the pseudo Hecke algebra \(PH_n(q)\) may be viewed as an analogue of the singular Hecke algebra \(SH_n(q)\) studied in \cite{ParisRabenda2008}. However, this correspondence is purely algebraic and does not extend to an equivalence of the underlying topological theories.

The natural inclusion
\[
PM_n\hookrightarrow PM_{n+1}
\]
induces an algebra homomorphism
\[
PH_n(q)\hookrightarrow PH_{n+1}(q).
\]
Thus the pseudo Hecke algebras form a tower
\[
PH_1(q)\subset PH_2(q)\subset PH_3(q)\subset \cdots ,
\]
analogous to the classical tower of Hecke algebras used in the construction of the HOMFLYPT polynomial \cite{Jones1987}.

Since the defining Hecke relations involve only the classical generators \(g_i\), the number of pseudo generators in a word is preserved by the quotient relations. This induces a natural grading by the number of pseudo crossings.

\begin{definition}\label{def:grading}
For \(d\geq 0\), let \(P_dH_n(q)\) denote the \(R\)-submodule of \(PH_n(q)\) spanned by elements represented by pseudo braids containing exactly \(d\) pseudo generators.
\end{definition}

Then
\[
PH_n(q)
=
\bigoplus_{d\geq 0} P_dH_n(q).
\]
The integer \(d\) records the pseudo degree, or equivalently the number of pre-crossings. Similar gradings occur in the singular Hecke algebra setting
\cite{ParisRabenda2008}.

\begin{remark}
This grading is essential for the invariant constructed below. In particular, pseudo-stabilization changes the pseudo degree, and the normalization of the invariant must compensate for this variation.
\end{remark}


\subsection{The resolution homomorphism}\label{subsection:reshom}

A central idea of this paper is to interpret each pseudo crossing as a linear combination of the two possible classical crossings. Algebraically, this corresponds to resolving a pseudo generator \(p_i\) as a linear combination of the Hecke generator \(g_i\) and its inverse \(g_i^{-1}\).

\medskip 

Let \(H_n(q)\) denote the classical Hecke algebra of type \(A\), and define
\[
h_i := Xg_i + Yg_i^{-1},
\]
where \(X,Y\in R\). 

We define a map on generators by
\[
\rho_{X,Y}(g_i)=g_i,
\qquad
\rho_{X,Y}(p_i)=h_i.
\]

\begin{remark}
The above construction is reminiscent of diagrammatic approaches such as tangle insertion (cf.\ \cite{HenrichKauffman2017}), in which a crossing is replaced by a linear combination of simpler local configurations. In the present setting, this idea is realized at the algebraic level via the Hecke algebra framework.
\end{remark}

\begin{proposition}
The above assignment extends to a well-defined algebra homomorphism
\[
\rho_{X,Y}:PH_n(q)\longrightarrow H_n(q).
\]
\end{proposition}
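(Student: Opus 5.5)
The plan is to use the universal properties of the two quotient constructions defining \(PH_n(q)\). First I will define a monoid homomorphism \(\tilde\rho: PM_n \to H_n(q)^{\times}\cup H_n(q)\), that is, into the multiplicative monoid of \(H_n(q)\). On generators it is given by \(\sigma_i^{\pm1}\mapsto g_i^{\pm1}\) and \(p_i\mapsto h_i=Xg_i+Yg_i^{-1}\). Since \(PM_n\) is presented by generators and relations (Definition~\ref{def:pbrmon}), this assignment extends to a monoid map once each defining relation is checked in \(H_n(q)\). By the universal property of the monoid algebra, \(\tilde\rho\) extends \(R\)-linearly to an algebra map \(R[PM_n]\to H_n(q)\). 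Finally, since \(g_i\) satisfies \(g_i^2=(q-1)g_i+q\) in \(H_n(q)\), the ideal \(\langle \sigma_i^2-(q-1)\sigma_i-q\rangle\) lies in the kernel, and the map descends to \(\rho_{X,Y}:PH_n(q)\to H_n(q)\).

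Checking the relations splits into three groups.

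\textbf{Classical braid relations.} These hold among the \(\sigma_i^{\pm1}\), and their images hold because \(H_n(q)\) is a quotient of \(R[B_n]\). The relation \(\sigma_i\sigma_i^{-1}=1\) is also respected, since \(g_i^{-1}\) is a genuine inverse in \(H_n(q)\).

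\textbf{Commutation relations.} These are \(p_ip_j=p_jp_i\) and \(p_i\sigma_j^{\pm1}=\sigma_j^{\pm1}p_i\) for \(|i-j|\ge2\), together with \(p_i\sigma_i^{\pm1}=\sigma_i^{\pm1}p_i\). Because \(h_i\) is a polynomial in \(g_i^{\pm1}\), they follow from the far commutativity \(g_ig_j=g_jg_i\) and from the fact that \(g_i\) commutes with itself and its inverse.

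\textbf{Mixed relations.} These are \(\sigma_i\sigma_{i+1}p_i=p_{i+1}\sigma_i\sigma_{i+1}\) and \(\sigma_{i+1}\sigma_ip_{i+1}=p_i\sigma_{i+1}\sigma_i\). For the first, I need
\[
g_ig_{i+1}(Xg_i+Yg_i^{-1})=(Xg_{i+1}+Yg_{i+1}^{-1})g_ig_{i+1}.
\]
By linearity in \(X,Y\), it suffices to check two identities. The first is \(g_ig_{i+1}g_i=g_{i+1}g_ig_{i+1}\), which is the braid relation. The second is \(g_ig_{i+1}g_i^{-1}=g_{i+1}^{-1}g_ig_{i+1}\). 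This holds because conjugation by \(g_ig_{i+1}\) sends \(g_i\) to \(g_{i+1}\), as follows from the braid relation, and conjugation is a group automorphism on the invertible elements, so it sends \(g_i^{-1}\) to \(g_{i+1}^{-1}\). Equivalently, \(\sigma_i\sigma_{i+1}\sigma_i^{\pm1}(\sigma_i\sigma_{i+1})^{-1}=\sigma_{i+1}^{\pm1}\) already holds in \(B_n\). The second mixed relation is handled symmetrically via \(\sigma_{i+1}\sigma_i\sigma_{i+1}^{\pm1}(\sigma_{i+1}\sigma_i)^{-1}=\sigma_i^{\pm1}\).

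The only step requiring a small idea is the mixed relations. The cleanest route is to observe that each one has the form \(\beta x_i=x_{i\pm1}\beta\) with \(\beta\in B_n\), and that it holds in \(B_n\) for both \(x=\sigma^{+1}\) and \(x=\sigma^{-1}\). It therefore holds for any linear combination of the two, as an identity in the group algebra \(R[B_n]\), and hence in its quotient \(H_n(q)\). Everything else is formal.
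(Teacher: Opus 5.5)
Your proof is correct and follows the same route as the paper: you check every defining relation of \(PM_n\) under \(\sigma_i\mapsto g_i\), \(p_i\mapsto h_i\), and note that the quadratic relation already holds in \(H_n(q)\). Your conjugation argument for the \(Y\)-term of the mixed relations, \(g_ig_{i+1}g_i^{-1}=g_{i+1}^{-1}g_ig_{i+1}\), spells out what the paper simply attributes to the braid relation, and factoring explicitly through \(R[PM_n]\) makes the descent to the quotient precise.
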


\begin{proof}
The verification reduces to checking that the defining relations of \(PM_n\) are preserved under \(\rho_{X,Y}\), using the braid relations in \(H_n(q)\) and the fact that each \(h_i\) is a linear combination of \(g_i\) and \(g_i^{-1}\).

\medskip 

The classical braid relations among the \(g_i\)'s are preserved because \(\rho_{X,Y}\) acts as the identity on the classical generators. The Hecke quadratic relations are also preserved by definition of \(H_n(q)\).

\medskip 

It remains to check the relations involving pseudo generators. Write
\[
h_i=Xg_i+Yg_i^{-1}.
\]

First, suppose \(|i-j|\geq 2\). Then \(g_i\) commutes with \(g_j\), and hence \(g_i^{\pm1}\) commutes with \(g_j^{\pm1}\). Therefore
\[
h_i h_j=h_j h_i,
\]

\noindent and thus, the relation \(p_i p_j = p_j p_i\) is preserved under \(\rho_{X,Y}\).

\medskip 

Similarly, if \(|i-j|\geq 2\), then \(g_j^{\pm1}\) commutes with
\(g_i^{\pm1}\). Hence
\[
h_i g_j^{\pm1}=g_j^{\pm1}h_i,
\]
\noindent so the relation \(p_i\sigma_j^{\pm1}=\sigma_j^{\pm1}p_i\) is preserved under \(\rho_{X,Y}\).

\medskip 

Next, since \(h_i\) is a linear combination of \(g_i\) and \(g_i^{-1}\), it
commutes with \(g_i\) and with \(g_i^{-1}\). Therefore
\[
h_i g_i^{\pm1}=g_i^{\pm1}h_i,
\]
\noindent and the relation \(p_i\sigma_i^{\pm1}=\sigma_i^{\pm1}p_i\) is preserved.

\medskip 

It remains to check the two adjacent mixed relations. Consider first
\[
\sigma_i\sigma_{i+1}p_i=p_{i+1}\sigma_i\sigma_{i+1}.
\]
Under \(\rho_{X,Y}\), this becomes
\[
g_i g_{i+1}h_i=h_{i+1}g_i g_{i+1}.
\]
Expanding both sides, we need
\[
g_i g_{i+1}(Xg_i+Yg_i^{-1})
=
(Xg_{i+1}+Yg_{i+1}^{-1})g_i g_{i+1}.
\]
The \(X\) and the \(Y\)-terms agree because of the braid relation \(g_i g_{i+1}g_i=g_{i+1}g_i g_{i+1}.\) Thus the first adjacent mixed relation is preserved.

\medskip 

The second adjacent mixed relation is verified in the same way, using the same braid relation and multiplying by inverses as above.

\medskip 

Therefore all defining relations of \(PM_n\), and hence all defining relations of \(PH_n(q)\), are respected by \(\rho_{X,Y}\). Consequently, the assignment extends to a well-defined algebra homomorphism.
\end{proof}

\begin{remark}
The map \(\rho_{X,Y}\) should be understood as an algebraic resolution of pre-crossings. It differs from the desingularization maps in the theory of singular links, where a singular crossing is resolved into a linear combination of a classical crossing and a smoothing. In the pseudo setting, a pre-crossing represents unresolved over/under information, so the natural resolution is into the two possible classical crossings.
\end{remark}

The homomorphism \(\rho_{X,Y}\) provides the key mechanism for transferring the Ocneanu trace from the classical Hecke algebra to the pseudo Hecke algebra in the next section, leading to the construction of a HOMFLYPT-type invariant for pseudo links.


\section{Trace construction and a HOMFLYPT-type invariant}\label{trace}

In this section we construct an invariant of oriented pseudo links by combining the resolution homomorphism with the Ocneanu trace on the classical Hecke algebra. The key point is that, due to the presence of the pseudo-stabilization move, an additional normalization is required to obtain invariance under pseudo Markov moves.

\subsection{The induced trace}

Let \(\rho_{X,Y}:PH_n(q)\to H_n(q)\) be the resolution homomorphism from \S~\ref{subsection:reshom}, and let
\[
\operatorname{tr}_n:H_n(q)\to R
\]
be the Ocneanu trace on the classical Hecke algebra. We recall that this trace is uniquely characterized by the normalization
\[
\operatorname{tr}_1(1)=1,
\]
together with the properties
\[
\operatorname{tr}_{n+1}(a)=\operatorname{tr}_n(a),
\qquad
\operatorname{tr}_{n+1}(a g_n)=z\,\operatorname{tr}_n(a),
\]
for all \(a\in H_n(q)\).

\medskip 

We define a linear map \(T_n:PH_n(q)\to R\) by
\[
T_n(a)=\operatorname{tr}_n\big(\rho_{X,Y}(a)\big).
\]
This allows us to transfer the Ocneanu trace from the classical Hecke algebra to the pseudo Hecke algebra.

\begin{proposition}
The family \(\{T_n\}_{n\geq 1}\) satisfies
\[
T_n(ab)=T_n(ba),
\]
\[
T_{n+1}(a)=T_n(a),
\]
\[
T_{n+1}(a g_n)=z\,T_n(a),
\]
and
\[
T_{n+1}(a p_n)=(Xz+Yz_-)\,T_n(a),
\]
for all \(a\in PH_n(q)\), where
\[
z_-:=q^{-1}z+q^{-1}-1.
\]
\end{proposition}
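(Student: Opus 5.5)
The plan is to transport each property through the resolution homomorphism \(\rho_{X,Y}\) and then apply the corresponding property of the Ocneanu trace \(\operatorname{tr}_n\). The key structural fact is that \(\rho_{X,Y}\) is an algebra homomorphism compatible with the towers: it acts as the identity on the \(g_i\) and sends \(p_i\) to \(h_i\). Hence the square formed by the inclusions \(PH_n(q)\hookrightarrow PH_{n+1}(q)\) and \(H_n(q)\hookrightarrow H_{n+1}(q)\) commutes. This holds because both inclusions are defined on generators by \(g_i\mapsto g_i\), \(p_i\mapsto p_i\), and \(h_i\) involves only \(g_i^{\pm1}\).

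\textbf{Trace property.} Since \(\rho_{X,Y}\) is multiplicative,
\[
T_n(ab)=\operatorname{tr}_n\big(\rho_{X,Y}(a)\rho_{X,Y}(b)\big)=\operatorname{tr}_n\big(\rho_{X,Y}(b)\rho_{X,Y}(a)\big)=T_n(ba).
\]

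\textbf{Inclusion property.} For \(a\in PH_n(q)\), compatibility with the towers gives \(\rho_{X,Y}(a)\in H_n(q)\subset H_{n+1}(q)\). Then \(\operatorname{tr}_{n+1}(\rho_{X,Y}(a))=\operatorname{tr}_n(\rho_{X,Y}(a))\), which is \(T_{n+1}(a)=T_n(a)\).

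\textbf{Classical stabilization.} We have \(\rho_{X,Y}(ag_n)=\rho_{X,Y}(a)g_n\) with \(\rho_{X,Y}(a)\in H_n(q)\). The Markov property of \(\operatorname{tr}\) then yields \(T_{n+1}(ag_n)=z\,T_n(a)\).

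\textbf{Pseudo-stabilization.} This is the main computation. Write
\[
\rho_{X,Y}(ap_n)=X\,\rho_{X,Y}(a)g_n+Y\,\rho_{X,Y}(a)g_n^{-1}.
\]
The \(X\)-term contributes \(Xz\,T_n(a)\) by the previous step. For the \(Y\)-term, use the inverse formula \(g_n^{-1}=q^{-1}g_n+(q^{-1}-1)\) recalled earlier, together with linearity and the two Ocneanu properties. Writing \(b=\rho_{X,Y}(a)\), this gives
\[
\operatorname{tr}_{n+1}(b\,g_n^{-1})=q^{-1}z\operatorname{tr}_n(b)+(q^{-1}-1)\operatorname{tr}_n(b)=z_-\,T_n(a).
\]
Summing the two terms yields \((Xz+Yz_-)T_n(a)\).

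\textbf{Where care is needed.} No step is a genuine obstacle. The one point to be careful about is that \(\rho_{X,Y}(a)\) indeed lies in the image of \(H_n(q)\), rather than merely in \(H_{n+1}(q)\). This follows from the tower compatibility of \(\rho_{X,Y}\) verified on generators, since \(a\) is a combination of words in \(g_i,p_i\) with \(i<n\).
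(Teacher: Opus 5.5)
Your proposal is correct and follows the paper's proof essentially step by step. It transports each property through the homomorphism $\rho_{X,Y}$, uses the trace, inclusion and Markov properties of $\operatorname{tr}_n$, and applies the inverse formula $g_n^{-1}=q^{-1}g_n+(q^{-1}-1)$ to obtain $z_-$; your explicit check that $\rho_{X,Y}$ commutes with the tower inclusions, so that $\rho_{X,Y}(a)\in H_n(q)$, is used only implicitly in the paper and is a worthwhile clarification.
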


\begin{proof}
The trace property follows from the trace property of the Ocneanu trace:
\[
T_n(ab)
=
\operatorname{tr}_n\big(\rho_{X,Y}(ab)\big)
=
\operatorname{tr}_n\big(\rho_{X,Y}(a)\rho_{X,Y}(b)\big).
\]
Since \(\operatorname{tr}_n\) is a trace,
\[
\operatorname{tr}_n\big(\rho_{X,Y}(a)\rho_{X,Y}(b)\big)
=
\operatorname{tr}_n\big(\rho_{X,Y}(b)\rho_{X,Y}(a)\big),
\]
and hence
\[
T_n(ab)=T_n(ba).
\]

\noindent Next, for \(a\in PH_n(q)\), viewed inside \(PH_{n+1}(q)\), we have
\[
T_{n+1}(a)
=
\operatorname{tr}_{n+1}\big(\rho_{X,Y}(a)\big)
=
\operatorname{tr}_n\big(\rho_{X,Y}(a)\big)
=
T_n(a).
\]

\noindent For the classical stabilization generator, we compute
\[
T_{n+1}(a g_n)
=
\operatorname{tr}_{n+1}\big(\rho_{X,Y}(a g_n)\big).
\]
Since \(\rho_{X,Y}(g_n)=g_n\), this becomes
\[
T_{n+1}(a g_n)
=
\operatorname{tr}_{n+1}\big(\rho_{X,Y}(a)g_n\big).
\]
Using the Markov property of the Ocneanu trace,
\[
\operatorname{tr}_{n+1}\big(\rho_{X,Y}(a)g_n\big)
=
z\,\operatorname{tr}_n\big(\rho_{X,Y}(a)\big).
\]
Thus
\[
T_{n+1}(a g_n)=z\,T_n(a).
\]

Finally, for the pseudo-stabilization generator, we compute
\[
T_{n+1}(a p_n)
=
\operatorname{tr}_{n+1}\big(\rho_{X,Y}(a p_n)\big).
\]
Since
\[
\rho_{X,Y}(p_n)=Xg_n+Yg_n^{-1},
\]
we get
\[
T_{n+1}(a p_n)
=
\operatorname{tr}_{n+1}\big(\rho_{X,Y}(a)(Xg_n+Yg_n^{-1})\big).
\]
By linearity,
\[
T_{n+1}(a p_n)
=
X\,\operatorname{tr}_{n+1}\big(\rho_{X,Y}(a)g_n\big)
+
Y\,\operatorname{tr}_{n+1}\big(\rho_{X,Y}(a)g_n^{-1}\big).
\]

\noindent Using the Hecke relation, \(g_n^{-1}=q^{-1}g_n+(q^{-1}-1),\) we obtain
\[
\operatorname{tr}_{n+1}\big(\rho_{X,Y}(a)g_n^{-1}\big)
=
q^{-1}\operatorname{tr}_{n+1}\big(\rho_{X,Y}(a)g_n\big)
+
(q^{-1}-1)\operatorname{tr}_{n+1}\big(\rho_{X,Y}(a)\big).
\]
Therefore,
\[
\operatorname{tr}_{n+1}\big(\rho_{X,Y}(a)g_n^{-1}\big)
=
q^{-1}z\,\operatorname{tr}_n\big(\rho_{X,Y}(a)\big)
+
(q^{-1}-1)\operatorname{tr}_n\big(\rho_{X,Y}(a)\big).
\]
Hence
\[
\operatorname{tr}_{n+1}\big(\rho_{X,Y}(a)g_n^{-1}\big)
=
\left(q^{-1}z+q^{-1}-1\right)
\operatorname{tr}_n\big(\rho_{X,Y}(a)\big).
\]
Thus, with
\[
z_-:=q^{-1}z+q^{-1}-1,
\]
we have
\[
\operatorname{tr}_{n+1}\big(\rho_{X,Y}(a)g_n^{-1}\big)
=
z_-\,T_n(a).
\]

Combining the two terms gives
\[
T_{n+1}(a p_n)
=
Xz\,T_n(a)+Yz_-\,T_n(a),
\]
and therefore
\[
T_{n+1}(a p_n)
=
(Xz+Yz_-)\,T_n(a).
\]
\end{proof}

\begin{remark}
The coefficient \( Xz+Yz_-\) represents the trace-theoretic contribution of resolving a terminal pre-crossing into a combination of positive and negative stabilizations. This is precisely the factor that must be compensated by the pseudo normalization in the definition of the invariant.
\end{remark}


\subsection{Definition of the invariant}

Let \(\alpha\in PM_n\). For a word representative of \(\alpha\), let \(e(\alpha)\) denote the exponent sum of the classical generators \(\sigma_i\), and let \(d(\alpha)\) denote the number of pseudo generators \(p_i\) appearing in the word. The following proposition shows that these quantities are independent of the chosen representative.

\begin{proposition}
Let \(\alpha\in PM_n\). The number \(d(\alpha)\) of pseudo generators appearing in a word representative of \(\alpha\) is well-defined. Moreover, the classical exponent sum \(e(\alpha)\) is also well-defined.
\end{proposition}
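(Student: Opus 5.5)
The plan is to build two monoid homomorphisms out of $PM_n$ and read off $d$ and $e$ as their values. Since $PM_n$ is given by the presentation in Definition~\ref{def:pbrmon}, a map on generators extends to a monoid homomorphism exactly when it respects every defining relation. So in each case it suffices to check the relations one at a time.

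For $d$, consider the additive monoid $(\mathbb{Z}_{\geq 0},+)$ and set
\[
\delta(\sigma_i^{\pm1})=0,\qquad \delta(p_i)=1 .
\]
On a word, $\delta$ counts the pseudo letters. We check that each relation has the same value of $\delta$ on both sides.
\begin{itemize}
\item The braid relations among the $\sigma_i$'s, and the relations $\sigma_i\sigma_i^{-1}=1$, contain no $p$'s, so both sides give $0$.
\item The relations $p_ip_j=p_jp_i$ give $2$ on both sides.
\item The relations $p_i\sigma_j^{\pm1}=\sigma_j^{\pm1}p_i$ and $p_i\sigma_i^{\pm1}=\sigma_i^{\pm1}p_i$ give $1$ on both sides.
\item The two adjacent mixed relations $\sigma_i\sigma_{i+1}p_i=p_{i+1}\sigma_i\sigma_{i+1}$ and $\sigma_{i+1}\sigma_ip_{i+1}=p_i\sigma_{i+1}\sigma_i$ each contain exactly one pseudo letter per side, so both sides give $1$.
\end{itemize}
Hence $\delta$ descends to a homomorphism $PM_n\to\mathbb{Z}_{\geq0}$, and $d(\alpha)=\delta(\alpha)$ is independent of the word chosen.

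For $e$, use $(\mathbb{Z},+)$ and set
\[
\varepsilon(\sigma_i^{\pm1})=\pm1,\qquad \varepsilon(p_i)=0 .
\]
We again check each relation.
\begin{itemize}
\item The braid relations $\sigma_i\sigma_{i+1}\sigma_i=\sigma_{i+1}\sigma_i\sigma_{i+1}$ give $3$ on both sides.
\item The far commutation relations $\sigma_i\sigma_j=\sigma_j\sigma_i$ give $2$ on both sides.
\item The relations $\sigma_i\sigma_i^{-1}=\sigma_i^{-1}\sigma_i=1$ give $0$ on both sides.
\item In every mixed relation the classical letters on the two sides have the same multiset of exponents. For example, in the adjacent relations both sides contain $\sigma_i$ and $\sigma_{i+1}$ once each, so both give $2$.
\item The relations $p_ip_j=p_jp_i$ give $0$ on both sides.
\end{itemize}
Thus $\varepsilon$ is a well-defined homomorphism $PM_n\to\mathbb{Z}$, and $e(\alpha)=\varepsilon(\alpha)$.

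The main point needing care is that $PM_n$ is defined by a presentation, so equality of two words in $PM_n$ means exactly that they are related by a finite sequence of applications of the defining relations. This includes the free cancellations $\sigma_i\sigma_i^{-1}=1$, which are implicit because the $\sigma_i$ are invertible. Each such application preserves both counts, which justifies invariance along any chain of rewrites.

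We may also want these counts on $PH_n(q)$. Its defining Hecke relation $\sigma_i^2=(q-1)\sigma_i+q$ mixes the exponent sums $2$, $1$ and $0$. So $e$ is not a grading on $PH_n(q)$ and is only claimed on $PM_n$, exactly as in the statement. By contrast, $d$ is preserved by the Hecke relation, since no term of it contains a $p_i$. This is consistent with the grading of Definition~\ref{def:grading}.
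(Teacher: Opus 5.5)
Your proof is correct and takes the same route as the paper. Both arguments check, relation by relation in the presentation of $PM_n$, that the number of pseudo letters and the classical exponent sum agree on the two sides; framing this as monoid homomorphisms $PM_n\to(\mathbb{Z}_{\geq0},+)$ and $PM_n\to(\mathbb{Z},+)$ only formalizes that check, and your explicit treatment of $\sigma_i\sigma_i^{-1}=\sigma_i^{-1}\sigma_i=1$, which the paper tacitly includes among the classical braid relations, adds welcome care.
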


\begin{proof}
We check that the defining relations of \(PM_n\) preserve both quantities.

The classical braid relations involve only the generators
\(\sigma_i^{\pm1}\), and therefore do not affect the number of pseudo
generators. They also preserve the total exponent sum of the classical
generators.

The far-commutativity relations
\[
p_i p_j=p_j p_i,
\qquad |i-j|\geq 2,
\]
preserve the number of pseudo generators, since both sides contain exactly
two pseudo generators.

Similarly, the relations
\[
p_i\sigma_j^{\pm1}=\sigma_j^{\pm1}p_i,
\qquad |i-j|\geq 2,
\]
and
\[
p_i\sigma_i^{\pm1}=\sigma_i^{\pm1}p_i
\]
preserve the number of pseudo generators and the classical exponent sum.

Finally, the adjacent mixed relations
\[
\sigma_i\sigma_{i+1}p_i=p_{i+1}\sigma_i\sigma_{i+1}
\]
and
\[
\sigma_{i+1}\sigma_i p_{i+1}=p_i\sigma_{i+1}\sigma_i
\]
also preserve both quantities: each side contains exactly one pseudo
generator and two positive classical generators.

Thus all defining relations preserve \(d(\alpha)\) and \(e(\alpha)\).
Hence both are well-defined functions on \(PM_n\).
\end{proof}

\begin{remark}
The integer \(d(\alpha)\) coincides with the degree of \(\alpha\) with respect to the natural grading introduced Definition~\ref{def:grading}.
\end{remark}

\begin{definition}
Let \(\alpha \in PM_n\). We define a function
\[
\mathcal P(\widehat{\alpha})
=
A^{n-1} B^{e(\alpha)} C^{d(\alpha)} T_n(\alpha),
\]
where \(A,B,C \in R\) are parameters.
\end{definition}

We now determine the conditions on \(A,B,C\) that ensure that \(\mathcal P\) is invariant under the pseudo Markov moves.

\paragraph{Positive stabilization.}
Let \(\alpha \in PM_n\). Then
\[
\mathcal P(\widehat{\alpha \sigma_n})
=
A^{n} B^{e(\alpha)+1} C^{d(\alpha)} T_{n+1}(\alpha \sigma_n).
\]
Using \(T_{n+1}(\alpha \sigma_n)=z\,T_n(\alpha),\) we obtain
\[
\mathcal P(\widehat{\alpha \sigma_n})
=
A^{n} B^{e(\alpha)+1} C^{d(\alpha)} z\,T_n(\alpha).
\]
Comparing with \(\mathcal P(\widehat{\alpha})
=
A^{n-1} B^{e(\alpha)} C^{d(\alpha)} T_n(\alpha),
\) we get \(\mathcal P(\widehat{\alpha \sigma_n})
=
ABz\,\mathcal P(\widehat{\alpha}).
\) Thus we require
\begin{equation}\label{eqpos}
  ABz=1.  
\end{equation}

\paragraph{Negative stabilization.}
Similarly, 
\[\mathcal P(\widehat{\alpha \sigma_n^{-1}})
=
A^{n} B^{e(\alpha)-1} C^{d(\alpha)} T_{n+1}(\alpha \sigma_n^{-1}).
\]
Using \(T_{n+1}(\alpha \sigma_n^{-1})=z_-\,T_n(\alpha),
\) we obtain
\[
\mathcal P(\widehat{\alpha \sigma_n^{-1}})
=
A^{n} B^{e(\alpha)-1} C^{d(\alpha)} z_-\,T_n(\alpha),
\]
and hence
\(
\mathcal P(\widehat{\alpha \sigma_n^{-1}})
=
AB^{-1}z_-\,\mathcal P(\widehat{\alpha}).
\) Thus we require
\begin{equation}\label{eqpneg}
AB^{-1}z_-=1.
\end{equation}

\paragraph{Pseudo-stabilization.}
Finally,
\[
\mathcal P(\widehat{\alpha p_n})
=
A^{n} B^{e(\alpha)} C^{d(\alpha)+1} T_{n+1}(\alpha p_n).
\]
Using \(T_{n+1}(\alpha p_n)=(Xz+Yz_-)\,T_n(\alpha),
\) we obtain 
\[
\mathcal P(\widehat{\alpha p_n})
=
A^{n} B^{e(\alpha)} C^{d(\alpha)+1} (Xz+Yz_-)\,T_n(\alpha).
\] 
Comparing with \(\mathcal P(\widehat{\alpha})\), we get \(\mathcal P(\widehat{\alpha p_n})
=
AC(Xz+Yz_-)\,\mathcal P(\widehat{\alpha}),\) and therefore we require
\begin{equation}\label{eqppseudo}
AC(Xz+Yz_-)=1.
\end{equation}

Solving equations~(\ref{eqpos}), (\ref{eqpneg}) and (\ref{eqppseudo}) yields
\begin{equation}\label{eqcoef}
B^2=\frac{z_-}{z},
\qquad
A=\frac{1}{Bz},
\qquad
C=\frac{1}{A(Xz+Yz_-)}.
\end{equation}

Throughout the construction, we work over a coefficient ring \(R\) containing
the parameters \(q^{\pm1},z,X,Y\), and in which the elements
\[
z,\qquad z_-:=q^{-1}z+q^{-1}-1,\qquad Xz+Yz_-
\]
are invertible. We also assume that \(R\) contains an element \(B\) satisfying \( B^2=\frac{z_-}{z}. \) We then set
\[
A=\frac{1}{Bz},
\qquad
C=\frac{1}{A(Xz+Yz_-)}.
\]


\begin{theorem}
Let \(\alpha \in PM_n\), and define
\[
\mathcal P(\widehat{\alpha})
=
A^{n-1} B^{e(\alpha)} C^{d(\alpha)} T_n(\alpha),
\]
where \(T_n=\operatorname{tr}_n\circ \rho_{X,Y}\) and the constants
\(A,B,C\) satisfy Eq.~\eqref{eqcoef}. Then \(\mathcal P\) is an invariant of oriented pseudo links.
\end{theorem}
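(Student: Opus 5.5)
The plan is to combine the Alexander and Markov theorems for pseudo braids with the properties of $T_n$ established in the preceding proposition. By the Alexander theorem, every oriented pseudo link is $\widehat{\alpha}$ for some $\alpha\in PM_n$. By the Markov theorem, it then suffices to check three things: that the expression $A^{n-1}B^{e(\alpha)}C^{d(\alpha)}T_n(\alpha)$ depends only on the element $\alpha\in PM_n$ and not on the chosen word, and that it is unchanged under conjugation and commuting, and under classical and pseudo (de)stabilization.

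\textbf{Well-definedness on $PM_n$.} The integers $n$, $e(\alpha)$ and $d(\alpha)$ are well defined by the proposition on exponent sum and pseudo degree. The value $T_n(\alpha)$ is well defined because $\alpha\mapsto$ its image in $PH_n(q)$ is a monoid map, and $\rho_{X,Y}$ and $\operatorname{tr}_n$ are well-defined linear maps.

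\textbf{Conjugation and commuting.} For $\alpha,\beta\in PM_n$, additivity of $e$ and $d$ gives $e(\alpha\beta)=e(\beta\alpha)$ and $d(\alpha\beta)=d(\beta\alpha)$. Moreover $T_n(\alpha\beta)=T_n(\beta\alpha)$ by the trace property. Conjugation by $\beta\in B_n$ is a special case of commuting: write $\beta^{-1}\alpha\beta=(\beta^{-1}\alpha)\beta$ and compare with $\beta\beta^{-1}\alpha=\alpha$. The exponent sum is preserved since $e(\beta^{-1})=-e(\beta)$, and $d$ is unchanged.

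\textbf{Stabilizations.} These are exactly the three computations carried out before the statement, using $T_{n+1}(a g_n)=zT_n(a)$, $T_{n+1}(a p_n)=(Xz+Yz_-)T_n(a)$ and $T_{n+1}(a g_n^{-1})=z_-T_n(a)$. They give the factors $ABz$, $AB^{-1}z_-$ and $AC(Xz+Yz_-)$. Each factor equals $1$ by \eqref{eqcoef}:
\begin{itemize}
\item $ABz=1$ holds by the definition $A=1/(Bz)$;
\item $AB^{-1}z_-=z_-/(B^2z)=1$ holds because $B^2=z_-/z$;
\item $AC(Xz+Yz_-)=1$ holds by the definition of $C$.
\end{itemize}
One should also note that $\alpha\in PM_n$ is regarded in $PM_{n+1}$ through the inclusion, with $e$ and $d$ unchanged. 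Destabilization is just the reverse reading of the same equalities.

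\textbf{Expected obstacle.} The only delicate point is the negative-stabilization identity $T_{n+1}(ag_n^{-1})=z_-T_n(a)$. It uses the relation $g_n^{-1}=q^{-1}g_n+(q^{-1}-1)$ together with $\operatorname{tr}_{n+1}(a)=\operatorname{tr}_n(a)$, exactly as in the proof of the pseudo-stabilization formula. I will record it explicitly and then conclude that $\mathcal P$ is constant on Markov equivalence classes, hence an invariant of oriented pseudo links.
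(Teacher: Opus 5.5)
Your proposal is correct and follows the same route as the paper. Both arguments reduce the claim via the pseudo Alexander and Markov theorems, use the trace property of $T_n$ for conjugation and commuting, and use the normalization conditions \eqref{eqcoef} for the classical and pseudo stabilizations. Your extra checks make explicit what the paper's short proof leaves implicit: well-definedness on $PM_n$, additivity of $e$ and $d$ under commuting, and the verification that each stabilization factor equals $1$.
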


\begin{proof}
By the pseudo Alexander theorem, every pseudo link arises as the closure of a pseudo braid. By the pseudo Markov theorem, two pseudo braids have isotopic closures if and only if they are related by a finite sequence of conjugation, commuting, classical and pseudo-stabilization moves.

The trace property of \(T_n\) implies invariance under conjugation and commuting. The normalization conditions derived above ensure invariance under positive and negative stabilization, as well as pseudo-stabilization.

Therefore, \(\mathcal P\) is invariant under all pseudo Markov moves, and hence defines an invariant of oriented pseudo links.
\end{proof}

\begin{remark}
The invariant \(\mathcal P\) extends the classical HOMFLYPT construction. The presence of pseudo crossings introduces an additional degree, encoded by \(d(\alpha)\), which necessitates the normalization factor \(C^{d(\alpha)}\) and which reflects the role of the pseudo Reidemeister 1 move.
\end{remark}


\section{Skein relations and examples}\label{skein}

In this section we derive the skein relations satisfied by the invariant \(\mathcal P\) and present explicit computations illustrating its behavior. In particular, we show that \(\mathcal P\) satisfies a natural pseudo skein relation corresponding to the algebraic resolution of pre-crossings introduced in Section~\ref{pseudoheck}.

\subsection{Classical skein relation}

We recall that the Hecke relation \(g_i^2 = (q-1)g_i + q\) implies that \(g_i\) is invertible, with \(g_i^{-1} = q^{-1} g_i + (q^{-1}-1)\). This identity is the algebraic origin of the classical HOMFLYPT skein relation.

\medskip

Let \(L_+, L_-, L_0\) be oriented link diagrams which coincide outside a small disk and differ inside the disk by a positive crossing, a negative crossing, and the oriented smoothing, respectively (recall Figure~\ref{skeins}).

\begin{proposition}
The invariant \(\mathcal P\) satisfies a HOMFLYPT-type skein relation
\[
a\,\mathcal P(L_+) - a^{-1}\,\mathcal P(L_-)
=
b\,\mathcal P(L_0),
\]
where \(a,b\) depend on the parameters \(q,z\) and the normalization constants \(A,B\).
\end{proposition}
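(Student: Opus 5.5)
By the pseudo Alexander theorem and a standard conjugation argument, we may assume that $L_+$, $L_-$, $L_0$ are closures of pseudo braids $\alpha g_i\beta$, $\alpha g_i^{-1}\beta$ and $\alpha\beta$, with $\alpha,\beta\in PM_n$ and the distinguished crossing realized as the generator $\sigma_i^{\pm1}$. Concretely, we first braid the diagram so that the crossing in the disk becomes a braid generator. The pseudo generators sit entirely inside $\alpha$ and $\beta$, so all three braids share the same strand number $n$ and the same pseudo degree $d$. Their exponent sums are $e+1$, $e-1$ and $e$, where $e:=e(\alpha)+e(\beta)$. By the Definition of $\mathcal P$, the three values therefore carry the common factor $A^{n-1}C^{d}B^{e}$, multiplied by $B$, $B^{-1}$ and $1$ respectively.

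The algebraic input is the Hecke identity $g_i^{-1}=q^{-1}g_i+(q^{-1}-1)$, or equivalently
\[
g_i-q\,g_i^{-1}=(q-1)\cdot 1 .
\]
Since $\rho_{X,Y}$ is an algebra homomorphism fixing $g_i$ and $g_i^{-1}$, we multiply this identity on the left by $\rho_{X,Y}(\alpha)$ and on the right by $\rho_{X,Y}(\beta)$. Applying $\operatorname{tr}_n$ and using linearity then gives
\[
T_n(\alpha g_i\beta)-q\,T_n(\alpha g_i^{-1}\beta)=(q-1)\,T_n(\alpha\beta).
\]

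Next we substitute $T_n = (A^{n-1}B^{e}C^{d})^{-1}\times(\cdot)$ in each term. This turns the identity into
\[
B^{-1}\mathcal P(L_+)-qB\,\mathcal P(L_-)=(q-1)\,\mathcal P(L_0).
\]
To put it in the symmetric form of the statement, we multiply through by $q^{-1/2}$, or by a chosen square root $\sqrt q$ adjoined to $R$ if necessary. This yields $a=q^{-1/2}B^{-1}$, $a^{-1}=q^{1/2}B$ and $b=q^{1/2}-q^{-1/2}$. If one prefers not to adjoin $\sqrt q$, the relation can simply be stated in the unsymmetrized form above, with coefficients $B^{-1}$, $qB$, $q-1$. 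In either case the coefficients depend only on $q$ and $B$, where $B^2=z_-/z$ by Eq.~\eqref{eqcoef}, which is consistent with the statement.

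The step needing the most care is the first reduction: showing that an arbitrary local skein triple is realized by braids of this form, with matching normalization data $n$, $d$, $e\pm1$. The approach is to choose a braiding of $L_+$ in which the marked crossing is a braid generator, for instance via the $L$-move or Alexander-type braiding applied away from the disk. Switching or smoothing that generator then gives braidings of $L_-$ and $L_0$ with the same $n$ and $d$. Since $\mathcal P$ is already known to be an invariant by the preceding Theorem, the resulting relation between values holds independently of the braidings chosen.
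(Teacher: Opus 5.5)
Your proposal is correct and follows the route the paper indicates: the paper omits the computation, pointing only to the Hecke relation $g_i^{-1}=q^{-1}g_i+(q^{-1}-1)$, the normalization of $\mathcal P$, and Jones. Your identity $B^{-1}\mathcal P(L_+)-qB\,\mathcal P(L_-)=(q-1)\,\mathcal P(L_0)$, symmetrized by adjoining $\sqrt q$ to give $a=q^{-1/2}B^{-1}$ and $b=q^{1/2}-q^{-1/2}$, is exactly Jones's relation with $B=\sqrt{\lambda}$. You also fill in details the paper leaves implicit, namely the braiding step with the marked crossing kept as a generator (same $n$ and $d$) and the need for $\sqrt q$ to get the symmetric form.
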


\begin{remark}
The coefficients \(a\) and \(b\) can be computed explicitly from the relations \(T_{n+1}(a g_n)=z\,T_n(a)\) and \(g_n^{-1}=q^{-1}g_n+(q^{-1}-1)\), together with the normalization of \(\mathcal P\). Since this coincides formally with the classical HOMFLYPT skein relation, we omit the computation and refer to \cite{Jones1987, HOMFLY1985}.
\end{remark}


\subsection{Pseudo skein relation}

We now derive the skein relation associated with a pre-crossing.

\medskip 

Let \(L_p, L_+, L_-\) be three oriented link diagrams which are identical outside a small disk, and inside the disk differ by a pre-crossing, a positive crossing, and a negative crossing, respectively.

At the braid level, these diagrams correspond to elements obtained by replacing \(p_i\) with \(g_i\) or \(g_i^{-1}\). Using the definition of the resolution homomorphism \(\rho_{X,Y}\), we have
\[
T_n(\alpha p_i \beta)
=
X\,T_n(\alpha g_i \beta)
+
Y\,T_n(\alpha g_i^{-1} \beta).
\]

We now express this relation in terms of the normalized invariant
\(\mathcal P\). Let \(\alpha p_i \beta \in PM_n\) be a braid representative corresponding to these diagrams.

Let \(e := e(\alpha\beta)\) and \(d := d(\alpha\beta)\). Then
\[
\mathcal P(L_p)
=
A^{n-1} B^{e} C^{d+1}
\Big(
X\,T_n(\alpha g_i \beta)
+
Y\,T_n(\alpha g_i^{-1} \beta)
\Big).
\]

On the other hand,
\[
\mathcal P(L_+)
=
A^{n-1} B^{e+1} C^{d}
\,T_n(\alpha g_i \beta),
\]
and
\[
\mathcal P(L_-)
=
A^{n-1} B^{e-1} C^{d}
\,T_n(\alpha g_i^{-1} \beta).
\]

Solving for the trace terms, we obtain
\[
A^{n-1} B^{e} C^{d+1} T_n(\alpha g_i \beta)
=
C B^{-1} \,\mathcal P(L_+),
\]
and
\[
A^{n-1} B^{e} C^{d+1} T_n(\alpha g_i^{-1} \beta)
=
C B \,\mathcal P(L_-).
\]

It is convenient to introduce the coefficients
\[
\lambda_+ := X C B^{-1},
\qquad
\lambda_- := Y C B,
\]
and this leads to the following result:

\begin{proposition}[The pseudo skein relation]
The invariant \(\mathcal P\) satisfies
\[
\mathcal P(L_p)
=
\lambda_+ \,\mathcal P(L_+)
+
\lambda_- \,\mathcal P(L_-).
\]
\end{proposition}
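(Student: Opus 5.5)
The plan is to reduce the statement to the computation carried out just before it, after first putting the three diagrams in a common braid form. By the pseudo Alexander theorem, and because the three diagrams agree outside a small disk, I will choose a pseudo braid word \(\alpha p_i \beta \in PM_n\) whose closure is \(L_p\). The distinguished pre-crossing should be the occurrence of \(p_i\) shown, and the disk should be carried to a neighbourhood of that crossing.

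To get such a word I would braid the diagram by the usual threading or \(L\)-move procedure. Before doing so, I rotate the disk so that both strands inside it point downward. For an oriented crossing this is always possible, and it is how the paper's Figure of the generators orients \(p_i\) and \(\sigma_i^{\pm1}\). The braiding process only acts away from the chosen crossing, so the same surrounding word works for all three diagrams. Replacing \(p_i\) by \(\sigma_i\) or \(\sigma_i^{-1}\) then gives braid words \(\alpha\sigma_i\beta\) and \(\alpha\sigma_i^{-1}\beta\) whose closures are \(L_+\) and \(L_-\). This is the main point needing care: the sign convention must make \(\sigma_i\) the positive crossing. Once that holds, the closures are well defined by the Markov theorem, and the value of \(\mathcal P\) does not depend on the representative, by the previous Theorem.

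Next I apply \(\rho_{X,Y}\). It is an algebra homomorphism and \(\rho_{X,Y}(p_i)=Xg_i+Yg_i^{-1}\), so linearity of \(\operatorname{tr}_n\) gives
\[
T_n(\alpha p_i\beta)=X\,T_n(\alpha g_i\beta)+Y\,T_n(\alpha g_i^{-1}\beta).
\]
Here \(\alpha\) and \(\beta\) are read in \(PH_n(q)\), and \(\sigma_i^{\pm1}\) maps to \(g_i^{\pm1}\).

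The last step is bookkeeping of the normalization, using that \(e\) and \(d\) are well defined (the earlier Proposition). Set \(e=e(\alpha\beta)\) and \(d=d(\alpha\beta)\). Then \(e(\alpha p_i\beta)=e\) and \(d(\alpha p_i\beta)=d+1\). For the classical words, \(e(\alpha\sigma_i^{\pm1}\beta)=e\pm1\) and \(d(\alpha\sigma_i^{\pm1}\beta)=d\). Multiplying the trace identity by \(A^{n-1}B^eC^{d+1}\) gives
\[
\mathcal P(L_p)=XCB^{-1}\,\bigl(A^{n-1}B^{e+1}C^{d}\,T_n(\alpha g_i\beta)\bigr)+YCB\,\bigl(A^{n-1}B^{e-1}C^{d}\,T_n(\alpha g_i^{-1}\beta)\bigr).
\]
The two bracketed quantities are \(\mathcal P(L_+)\) and \(\mathcal P(L_-)\), so \(\mathcal P(L_p)=\lambda_+\mathcal P(L_+)+\lambda_-\mathcal P(L_-)\). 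This step uses that \(B\) is invertible, which holds because \(B^2=z_-/z\) and \(z,z_-\) are invertible in \(R\).

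I expect the only real obstacle to be the first step: putting all three diagrams into braid form simultaneously with the same \(\alpha,\beta\), and keeping the sign convention consistent. The algebra after that is immediate.
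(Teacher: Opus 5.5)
Your proposal is correct and follows the paper's argument: apply $\rho_{X,Y}$ to obtain $T_n(\alpha p_i\beta)=X\,T_n(\alpha g_i\beta)+Y\,T_n(\alpha g_i^{-1}\beta)$. Then redistribute the factor $A^{n-1}B^{e}C^{d+1}$ using $e(\alpha\sigma_i^{\pm1}\beta)=e\pm1$ and $d(\alpha p_i\beta)=d+1$, which gives $\lambda_+=XCB^{-1}$ and $\lambda_-=YCB$.

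Your preliminary step is the standard justification for a point the paper only asserts ("at the braid level, these diagrams correspond to\ldots"). That step braids all three diagrams simultaneously, rotating the distinguished crossing so both strands point downward and braiding away from it.
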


\begin{remark}
Using the normalization conditions, the coefficients \(CB^{-1}\) and \(CB\) can be expressed explicitly in terms of the parameters \(q,z,X,Y\). Thus, the pseudo skein relation is entirely determined by the algebraic structure of the construction.
\end{remark}


\subsection{Examples}

We now compute the invariant \(\mathcal P\) for several simple examples. These computations illustrate the role of the normalization factors and show how pseudo crossings contribute through the resolution map.

\paragraph{The unknot.}
Let \(U\) denote the unknot, represented by the closure of the trivial braid on one strand. Since \(n=1\), \(e(1)=0\), and \(d(1)=0\), we have
\[
\mathcal P(U)
=
A^{0}B^{0}C^{0}T_1(1)
=
T_1(1).
\]
With the normalization \(T_1(1)=1\), this gives
\[
\mathcal P(U)=1.
\]

\paragraph{A single pseudo crossing.}
Consider the closure of the braid \(p_1\in PM_2\). This represents an unknot with a single pre-crossing, corresponding to the pseudo Reidemeister 1 move.
Here
\[
n=2,\qquad e(p_1)=0,\qquad d(p_1)=1.
\]
Therefore
\[
\mathcal P(\widehat{p_1})
=
A C\,T_2(p_1).
\]
Using the resolution map,
\[
\rho_{X,Y}(p_1)=Xg_1+Yg_1^{-1},
\]
we obtain
\[
T_2(p_1)
=
X\,\operatorname{tr}_2(g_1)
+
Y\,\operatorname{tr}_2(g_1^{-1}).
\]
By definition,
\[
\operatorname{tr}_2(g_1)=z
\]
and
\[
\operatorname{tr}_2(g_1^{-1})=z_-.
\]
Hence
\[
T_2(p_1)=Xz+Yz_-.
\]
Thus
\[
\mathcal P(\widehat{p_1})
=
AC(Xz+Yz_-).
\]
Using the normalization condition
\[
AC(Xz+Yz_-)=1,
\]
we obtain
\[
\mathcal P(\widehat{p_1})=1.
\]
This agrees with the fact that a single pre-crossing kink is removed by the pseudo Reidemeister 1 move.

\paragraph{A single classical crossing.}
Consider the braid \(g_1\in PM_2\). Here
\[
n=2,\qquad e(g_1)=1,\qquad d(g_1)=0.
\]
Therefore
\[
\mathcal P(\widehat{g_1})
=
AB\,T_2(g_1).
\]
Since
\[
T_2(g_1)=\operatorname{tr}_2(g_1)=z,
\]
we get
\[
\mathcal P(\widehat{g_1})
=
ABz.
\]
By the normalization condition
\[
ABz=1,
\]
we obtain
\[
\mathcal P(\widehat{g_1})=1.
\]

\paragraph{A mixed example.}
Consider the braid \(g_1p_1\in PM_2\). In this case
\[
n=2,\qquad e(g_1p_1)=1,\qquad d(g_1p_1)=1.
\]
Thus
\[
\mathcal P(\widehat{g_1p_1})
=
ABC\,T_2(g_1p_1).
\]
We compute
\[
T_2(g_1p_1)
=
\operatorname{tr}_2\big(\rho_{X,Y}(g_1p_1)\big).
\]
Since
\[
\rho_{X,Y}(g_1)=g_1
\qquad\text{and}\qquad
\rho_{X,Y}(p_1)=Xg_1+Yg_1^{-1},
\]
we have
\[
\rho_{X,Y}(g_1p_1)
=
g_1(Xg_1+Yg_1^{-1}).
\]
Therefore
\[
T_2(g_1p_1)
=
\operatorname{tr}_2\big(g_1(Xg_1+Yg_1^{-1})\big)
\]
and hence
\[
T_2(g_1p_1)
=
X\,\operatorname{tr}_2(g_1^2)
+
Y\,\operatorname{tr}_2(1).
\]
Using the Hecke relation
\[
g_1^2=(q-1)g_1+q,
\]
we obtain
\[
\operatorname{tr}_2(g_1^2)
=
(q-1)\operatorname{tr}_2(g_1)
+
q\,\operatorname{tr}_2(1).
\]
Since
\[
\operatorname{tr}_2(g_1)=z
\qquad\text{and}\qquad
\operatorname{tr}_2(1)=1,
\]
it follows that
\[
\operatorname{tr}_2(g_1^2)
=
(q-1)z+q.
\]
Thus
\[
T_2(g_1p_1)
=
X\big((q-1)z+q\big)+Y,
\]
and therefore
\[
\mathcal P(\widehat{g_1p_1})
=
ABC\Big(X\big((q-1)z+q\big)+Y\Big).
\]

\paragraph{A family of mixed braids.}
We now consider a family illustrating the non-triviality of the invariant.

Let
\[
\alpha_k = g_1^k p_1 \in PM_2,
\qquad k \in \mathbb{Z}.
\]
Then
\[
\mathcal P(\widehat{\alpha_k})
=
A B^k C\, T_2(g_1^k p_1).
\]

Using the resolution map, we compute
\[
T_2(g_1^k p_1)
=
\operatorname{tr}_2\big(g_1^k (X g_1 + Y g_1^{-1})\big),
\]
and hence
\[
T_2(g_1^k p_1)
=
X\,\operatorname{tr}_2(g_1^{k+1})
+
Y\,\operatorname{tr}_2(g_1^{k-1}).
\]

Therefore,
\[
\mathcal P(\widehat{\alpha_k})
=
A B^k C
\Big(
X\,\operatorname{tr}_2(g_1^{k+1})
+
Y\,\operatorname{tr}_2(g_1^{k-1})
\Big).
\]

The family \(\alpha_k = g_1^k p_1\) shows that the invariant depends non-trivially on the interaction between classical and pseudo crossings.


\section{A state-sum interpretation of the invariant}\label{statesum}

In this section we show that the invariant \(\mathcal P\) admits a
state-sum description obtained by resolving each pseudo crossing into classical crossings. This provides a combinatorial interpretation of the construction and makes explicit the role of the resolution map.

\subsection{States and classical resolutions}

Let \(\alpha \in PM_n\) be a pseudo braid with \(d(\alpha)=d\) pseudo generators.

\begin{definition}
A \emph{state} \(s\) of \(\alpha\) is a choice, for each pseudo generator \(p_i\) appearing in \(\alpha\), of either the positive generator \(g_i\) or the negative generator \(g_i^{-1}\).

Given a state \(s\), we denote by \(\alpha_s \in B_n\) the classical braid obtained by replacing each occurrence of \(p_i\) in \(\alpha\) according to this choice.
\end{definition}

For a state \(s\), let \(r_+(s)\) denote the number of pseudo generators replaced by \(g_i\), and let \(r_-(s)\) denote the number replaced by \(g_i^{-1}\). Then
\[
r_+(s) + r_-(s) = d.
\]
Each state \(s\) thus determines a complete classical resolution of the pseudo braid \(\alpha\).


\subsection{Expansion of the resolution homomorphism}

We first express the resolution homomorphism in terms of states.

\begin{lemma}\label{lem:resexp}
Let \(\alpha \in PM_n\) be a pseudo braid with \(d\) pseudo generators. Then
\[
\rho_{X,Y}(\alpha)
=
\sum_s X^{r_+(s)} Y^{r_-(s)} \,\alpha_s,
\]
where the sum is taken over all states \(s\) of \(\alpha\), and \(\alpha_s\) is viewed in \(H_n(q)\) via the canonical projection \(B_n \to H_n(q)\).
\end{lemma}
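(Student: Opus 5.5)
The plan is to fix a word representative of \(\alpha\) and expand \(\rho_{X,Y}\) multiplicatively. Write
\[
\alpha = w_0\, p_{i_1}\, w_1\, p_{i_2}\, w_2 \cdots p_{i_d}\, w_d,
\]
where each \(w_k\) is a (possibly empty) word in the classical generators \(\sigma_j^{\pm1}\). Since \(\rho_{X,Y}\) is an algebra homomorphism (by the Proposition in \S\ref{subsection:reshom}) and acts as the identity on the classical generators, we obtain
\[
\rho_{X,Y}(\alpha)=\bar w_0\,(Xg_{i_1}+Yg_{i_1}^{-1})\,\bar w_1\cdots(Xg_{i_d}+Yg_{i_d}^{-1})\,\bar w_d ,
\]
where \(\bar w_k\) denotes the image of \(w_k\) in \(H_n(q)\) under the canonical projection \(B_n\to H_n(q)\).

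The next step is to distribute the product by multilinearity of multiplication in \(H_n(q)\); the scalars \(X,Y\in R\) are central. This yields \(2^d\) terms, indexed by the choices \(\epsilon_k\in\{+1,-1\}\) for \(k=1,\dots,d\). The term attached to \((\epsilon_1,\dots,\epsilon_d)\) is
\[
X^{\#\{k:\epsilon_k=+1\}}\,Y^{\#\{k:\epsilon_k=-1\}}\;\bar w_0 g_{i_1}^{\epsilon_1}\bar w_1\cdots g_{i_d}^{\epsilon_d}\bar w_d .
\]
Such choices are exactly the states \(s\) of \(\alpha\). The exponents of \(X\) and \(Y\) are \(r_+(s)\) and \(r_-(s)\), and the remaining product is the image of \(\alpha_s\) in \(H_n(q)\). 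A short induction on \(d\) makes this rigorous. For \(d=0\) there is nothing to prove. For the inductive step, write \(\alpha=\alpha' p_{i} w\) and split the states of \(\alpha\) according to the choice made at the last pseudo generator.

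The main subtlety I expect is well-definedness: states are defined relative to a chosen word, while \(\alpha\) is an element of \(PM_n\). I would handle this by proving the identity for an arbitrary word representative. The left-hand side \(\rho_{X,Y}(\alpha)\) does not depend on the word, so the right-hand side, computed from any representative, equals it automatically. Along the way, \(d\) is well-defined by the earlier Proposition on \(d(\alpha)\) and \(e(\alpha)\). No further checking of the mixed relations is needed, since the homomorphism property already absorbs it.
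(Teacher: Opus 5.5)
Your proposal is correct and takes essentially the paper's route: apply $\rho_{X,Y}$ to a word, replace each $p_i$ by $Xg_i+Yg_i^{-1}$, and expand the product into $2^d$ terms indexed by states. Your explicit induction on $d$, and your observation that the identity holds for any word representative because the left-hand side is word-independent, make rigorous what the paper's proof leaves implicit.
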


\begin{proof}
By definition,
\[
\rho_{X,Y}(p_i) = X g_i + Y g_i^{-1}.
\]
Applying \(\rho_{X,Y}\) to \(\alpha\) replaces each pseudo generator by a linear combination of \(g_i\) and \(g_i^{-1}\). Expanding the resulting product yields a sum over all choices of replacing each \(p_i\) by either \(g_i\) or \(g_i^{-1}\).

Each such choice corresponds to a state \(s\), and contributes a term of the form
\[
X^{r_+(s)} Y^{r_-(s)} \,\alpha_s.
\]
Summing over all states gives the result.
\end{proof}


\subsection{State-sum formulation and relation to the HOMFLYPT invariant}

Let \(\mathcal H\) denote the normalized classical HOMFLYPT-type invariant, defined by
\[
\mathcal H(\widehat{\beta})
=
A^{n-1} B^{e(\beta)} \operatorname{tr}_n\big(\pi(\beta)\big),
\qquad \beta \in B_n,
\]
where \(\pi: B_n \to H_n(q)\) is the canonical projection.

\begin{lemma}\label{lem:resolution-expansion}
Let \(s\) be a state of \(\alpha \in PM_n\). Then
\[
e(\alpha_s) = e(\alpha) + r_+(s) - r_-(s).
\]
\end{lemma}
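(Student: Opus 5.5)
The plan is to compute the exponent sum of \(\alpha_s\) directly from a fixed word representative of \(\alpha\), and then appeal to the earlier proposition that \(e\) and \(d\) are independent of the chosen representative. Throughout, \(e\) is regarded as a function on words that descends to a well-defined function on \(PM_n\) (and on \(B_n\subset PM_n\)).

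\emph{Step 1: fix a word.} Write \(\alpha\) as a word \(w\) in the letters \(\sigma_i^{\pm1}\) and \(p_i\), containing exactly \(d\) pseudo letters. By definition, \(e(\alpha)\) is the sum of the exponents of the classical letters of \(w\); the pseudo letters contribute \(0\).

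\emph{Step 2: form the resolved word.} A state \(s\) replaces each pseudo letter \(p_i\) by \(\sigma_i\) or by \(\sigma_i^{-1}\). This yields a word \(w_s\) in the classical generators only, and \(w_s\) represents \(\alpha_s\in B_n\). The classical letters of \(w\) survive unchanged in \(w_s\). In addition, \(w_s\) contains \(r_+(s)\) new letters of exponent \(+1\) and \(r_-(s)\) new letters of exponent \(-1\). Adding these contributions gives
\[
e(w_s)=e(w)+r_+(s)-r_-(s).
\]

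\emph{Step 3: well-definedness.} This is the only point needing care. We must check that the left side equals \(e(\alpha_s)\) as an element of \(B_n\), and that the whole identity does not depend on the choice of \(w\). The first holds because the exponent sum is invariant under the braid relations, which is the classical part of the earlier proposition. The second needs more attention, because a state is defined relative to the occurrences of pseudo letters in a word. I would observe that each defining relation of \(PM_n\) pairs the pseudo letters on its two sides bijectively; for instance, \(p_i\) on one side of \(\sigma_i\sigma_{i+1}p_i=p_{i+1}\sigma_i\sigma_{i+1}\) corresponds to \(p_{i+1}\) on the other. So states transport across relations, with \(r_\pm(s)\) preserved. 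Moreover, resolving both sides of a relation according to a state gives words related by braid relations, exactly as in the homomorphism check for \(\rho_{X,Y}\). Hence both sides of the identity are independent of the representative, and the lemma follows.
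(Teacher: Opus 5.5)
Your Steps 1--2 are exactly the paper's proof: each replacement \(p_i\mapsto\sigma_i\) adds \(+1\) and each \(p_i\mapsto\sigma_i^{-1}\) adds \(-1\) to the exponent sum, while the classical letters are unchanged. Step 3 is extra care that the paper omits. It is harmless, but the transport of states across relations is not actually needed: the identity already holds for every word and every state of that word, and both sides are well defined by the earlier proposition together with the invariance of the exponent sum on \(B_n\).
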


\begin{proof}
Each replacement \(p_i \mapsto g_i\) contributes \(+1\) to the exponent sum, while each replacement \(p_i \mapsto g_i^{-1}\) contributes \(-1\). Summing over all pseudo generators yields the result.
\end{proof}


We now obtain the desired state-sum expression.

\begin{theorem}[State-sum formula]
Let \(\alpha \in PM_n\) be a pseudo braid with \(d(\alpha)=d\) pseudo generators. Then
\[
\mathcal P(\widehat{\alpha})
=
C^d
\sum_s
\left(X B^{-1}\right)^{r_+(s)}
\left(Y B\right)^{r_-(s)}
\mathcal H(\widehat{\alpha_s}),
\]
where the sum runs over all states \(s\) of \(\alpha\).
\end{theorem}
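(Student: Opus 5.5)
The plan is to start from the definition \(\mathcal P(\widehat{\alpha}) = A^{n-1}B^{e(\alpha)}C^{d}T_n(\alpha)\), where \(T_n(\alpha)=\operatorname{tr}_n(\rho_{X,Y}(\alpha))\), and substitute the state expansion of Lemma~\ref{lem:resexp}. By linearity of \(\operatorname{tr}_n\) this gives
\[
T_n(\alpha)=\sum_s X^{r_+(s)}Y^{r_-(s)}\operatorname{tr}_n\big(\pi(\alpha_s)\big),
\]
since each \(\alpha_s\in B_n\) maps to \(H_n(q)\) through \(\pi\). The identification of \(\rho_{X,Y}\) on classical generators with \(\pi\) is immediate from \(\rho_{X,Y}(g_i)=g_i\).

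Next, each trace term will be rewritten through the normalized classical invariant. By definition,
\[
\operatorname{tr}_n(\pi(\alpha_s)) = A^{-(n-1)}B^{-e(\alpha_s)}\mathcal H(\widehat{\alpha_s}).
\]
This requires \(A\) and \(B\) to be invertible. That holds in \(R\), because \(B^2=z_-/z\) with \(z,z_-\) invertible, and \(A=1/(Bz)\). Lemma~\ref{lem:resolution-expansion} then gives \(e(\alpha_s)=e(\alpha)+r_+(s)-r_-(s)\), so
\[
B^{e(\alpha)}B^{-e(\alpha_s)} = B^{-r_+(s)}B^{r_-(s)}.
\]
Substituting, the factors \(A^{n-1}\) cancel and one obtains
\[
\mathcal P(\widehat{\alpha}) = C^d\sum_s (XB^{-1})^{r_+(s)}(YB)^{r_-(s)}\mathcal H(\widehat{\alpha_s}),
\]
which is the claimed formula.

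The only delicate point is the bookkeeping convention for \(e(\alpha)\). Here \(e(\alpha)\) counts only the classical generators of \(\alpha\), and the pseudo generators become classical after resolution. Lemma~\ref{lem:resolution-expansion} already handles this, so what remains is a straightforward algebraic manipulation. One should also note that \(\mathcal H\) is applied to a braid representative, so no well-definedness issue arises: the formula is an identity of the expressions, and \(\mathcal H\) is in any case a classical link invariant by the \(d=0\) case of the main theorem.
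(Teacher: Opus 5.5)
Your proposal is correct and follows the paper's proof essentially line by line. Both expand $T_n(\alpha)$ via Lemma~\ref{lem:resexp}, rewrite each $\operatorname{tr}_n(\pi(\alpha_s))$ as $A^{-(n-1)}B^{-e(\alpha_s)}\mathcal H(\widehat{\alpha_s})$, and use Lemma~\ref{lem:resolution-expansion} to turn $B^{e(\alpha)-e(\alpha_s)}$ into $B^{-r_+(s)}B^{r_-(s)}$; your explicit check that $A$ and $B$ are invertible in $R$ is a point the paper uses only implicitly.
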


\begin{proof}
By definition,
\[
\mathcal P(\widehat{\alpha})
=
A^{n-1} B^{e(\alpha)} C^d
\, T_n(\alpha),
\]
where
\[
T_n(\alpha)
=
\operatorname{tr}_n\big(\rho_{X,Y}(\alpha)\big).
\]

Using the expansion of the resolution homomorphism (Lemma~\ref{lem:resexp}), we obtain
\[
T_n(\alpha)
=
\sum_s X^{r_+(s)} Y^{r_-(s)} \operatorname{tr}_n\big(\pi(\alpha_s)\big).
\]

Thus
\[
\mathcal P(\widehat{\alpha})
=
A^{n-1} B^{e(\alpha)} C^d
\sum_s
X^{r_+(s)} Y^{r_-(s)}
\operatorname{tr}_n\big(\pi(\alpha_s)\big).
\]

Now, by the definition of \(\mathcal H\),
\[
\operatorname{tr}_n\big(\pi(\alpha_s)\big)
=
A^{-(n-1)} B^{-e(\alpha_s)}
\mathcal H(\widehat{\alpha_s}).
\]

Substituting this into the previous expression yields
\[
\mathcal P(\widehat{\alpha})
=
C^d
\sum_s
X^{r_+(s)} Y^{r_-(s)}
B^{e(\alpha)-e(\alpha_s)}
\mathcal H(\widehat{\alpha_s}).
\]

Using the identity from Lemma~\ref{lem:resolution-expansion},
\[
e(\alpha)-e(\alpha_s) = -(r_+(s)-r_-(s)),
\]
we obtain
\[
B^{e(\alpha)-e(\alpha_s)}
=
B^{-r_+(s)} B^{r_-(s)}.
\]

Combining all factors gives
\[
\mathcal P(\widehat{\alpha})
=
C^d
\sum_s
\left(X B^{-1}\right)^{r_+(s)}
\left(Y B\right)^{r_-(s)}
\mathcal H(\widehat{\alpha_s}),
\]
as required.
\end{proof}

\begin{remark}
The above formula shows that the invariant \(\mathcal P\) can be interpreted as a weighted sum of classical HOMFLYPT-type invariants over all classical resolutions of the pseudo crossings. This is reminiscent of state-sum constructions such as the Kauffman bracket \cite{Kauffman1988}, although in the present case the weights arise from the Hecke algebra structure and the Markov trace rather than from a purely diagrammatic smoothing rule.
\end{remark}


\subsection{Skein-theoretic characterization}

The state-sum formula suggests that the invariant \(\mathcal P\) is determined by its values on classical links together with the pseudo skein relation. We make this precise in the following result.

\begin{theorem}[Skein-theoretic characterization]
The invariant \(\mathcal P\) is uniquely determined by the following properties:
\begin{enumerate}
    \item On classical links, \(\mathcal P\) agrees with the normalized
    HOMFLYPT-type invariant \(\mathcal H\).
    
    \item It satisfies the pseudo skein relation
    \[
    \mathcal P(L_p)
    =
    \lambda_+ \mathcal P(L_+)
    +
    \lambda_- \mathcal P(L_-).
    \]
    
    \item It is normalized by
    \[
    \mathcal P(U)=1,
    \]
    where \(U\) denotes the unknot (this is consistent with (1)).
\end{enumerate}
\end{theorem}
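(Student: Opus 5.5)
The plan is to split the statement into existence and uniqueness. Existence is already in hand: the invariant \(\mathcal P\) of the previous section satisfies (1), (2) and (3). For (1), note that if \(\alpha\in B_n\) then \(d(\alpha)=0\) and \(\rho_{X,Y}(\alpha)=\pi(\alpha)\). Hence \(\mathcal P(\widehat\alpha)=A^{n-1}B^{e(\alpha)}\operatorname{tr}_n(\pi(\alpha))=\mathcal H(\widehat\alpha)\); this is also the \(d=0\) case of the state-sum formula. Property (2) is the pseudo skein relation proposition. Property (3) is the unknot computation in the examples, and it is consistent with (1) because \(\mathcal H(U)=T_1(1)=1\). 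So the real content is uniqueness: any invariant \(\mathcal Q\) of oriented pseudo links satisfying (1)–(3) must coincide with \(\mathcal P\).

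For uniqueness I would argue by induction on the number \(d\) of pre-crossings in a diagram. Let \(\mathcal Q\) be any such function on pseudo links. I will show \(\mathcal Q(L)=\mathcal P(L)\) for every pseudo link \(L\), by induction on the minimal number of pre-crossings of a diagram (or braid representative) of \(L\).

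\textbf{Base case.} If \(d=0\), then \(L\) is classical and \(\mathcal Q(L)=\mathcal H(L)=\mathcal P(L)\) by (1).

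\textbf{Inductive step.} Take a diagram \(D\) of \(L\) with \(d\geq1\) pre-crossings, and choose one pre-crossing. The skein triple \(D_p=D\), \(D_+\), \(D_-\) is obtained by replacing it with a positive or a negative crossing. Both \(D_\pm\) have \(d-1\) pre-crossings, so by the induction hypothesis \(\mathcal Q(D_\pm)=\mathcal P(D_\pm)\). Relation (2), applied to both \(\mathcal Q\) and \(\mathcal P\), then gives
\[
\mathcal Q(L)=\lambda_+\mathcal Q(D_+)+\lambda_-\mathcal Q(D_-)=\lambda_+\mathcal P(D_+)+\lambda_-\mathcal P(D_-)=\mathcal P(L).
\]
Iterating this step reproduces exactly the state-sum formula: after \(d\) steps one has \(\mathcal Q(\widehat\alpha)=\sum_s\lambda_+^{r_+(s)}\lambda_-^{r_-(s)}\mathcal H(\widehat{\alpha_s})\), which is the state-sum theorem with \(\lambda_+=XCB^{-1}\) and \(\lambda_-=YCB\). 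As a closed-form alternative to the induction, I would cite this directly.

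The one point needing care is that the induction is on diagrams, while \(\mathcal Q\) is a function on pseudo links. Well-definedness is not an issue, because \(\mathcal Q\) is assumed to be an invariant: any diagram of \(L\) may be used. Fixing a braid representative \(\alpha\) via the pseudo Alexander theorem, I would induct on \(d(\alpha)\), which is well-defined by the earlier proposition. Here \(D_\pm\) are the closures of the braids obtained by replacing one \(p_i\) by \(\sigma_i^{\pm1}\), so \(d\) drops by one. The main obstacle I anticipate is clarifying the logical status of the theorem. Uniqueness needs (1) and (2) only: (3) is implied by (1), and the values of \(\mathcal H\) are themselves fixed by the classical skein relation together with \(\mathcal H(U)=1\). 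I would therefore remark that, unwinding one level further, \(\mathcal P\) is determined by the HOMFLYPT skein relation, the pseudo skein relation, and \(\mathcal P(U)=1\). This is the standard classical uniqueness argument via crossing changes and unlinking, which I would cite rather than reprove.
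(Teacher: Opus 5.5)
Your proposal is correct and follows essentially the same route as the paper: induction on the number of pre-crossings, with the base case supplied by (1) and the inductive step by the pseudo skein relation (2). Your extra checks are accurate supplements to the paper's uniqueness argument: that \(\mathcal P\) itself satisfies (1)--(3), and that unwinding the induction recovers the state-sum formula with \(\lambda_+=XCB^{-1}\), \(\lambda_-=YCB\). One small point: if you induct on the minimal number of pre-crossings, take \(D\) to be a minimal diagram, or else induct directly over all diagrams or braid words with \(d\) pre-crossings.
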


\begin{proof}
Let \(F\) be any pseudo link invariant satisfying properties (1)--(3). We show that \(F(L)\) is uniquely determined for every pseudo link \(L\).

Let \(L\) be represented by a pseudo link diagram with \(d\) pre-crossings. If \(d=0\), then \(L\) is classical, and \(F(L)=\mathcal H(L)\) is determined by condition (1).

Suppose now that \(d>0\). Choose a pre-crossing of \(L\). By the pseudo skein relation,
\[
F(L)
=
\lambda_+ F(L_+)
+
\lambda_- F(L_-),
\]
where \(L_+\) and \(L_-\) are obtained by replacing the chosen pre-crossing by a positive and a negative crossing, respectively. Both \(L_+\) and \(L_-\) have one fewer pre-crossing than \(L\).

Proceeding by induction on the number of pre-crossings, the values
\(F(L_+)\) and \(F(L_-)\) are uniquely determined, and hence \(F(L)\) is uniquely determined. Repeating this process reduces \(F(L)\) to a finite linear combination of values of \(F\) on classical links, which are determined by condition (1).

Therefore, for every pseudo link \(L\), the value \(F(L)\) is uniquely determined by properties (1)--(3). It follows that any two invariants satisfying these properties must agree on all pseudo links. In particular, the invariant \(\mathcal P\) is uniquely determined by these properties.
\end{proof}

\begin{remark}
This characterization shows that the invariant \(\mathcal P\) can be defined purely in skein-theoretic terms, independently of the underlying Hecke algebra construction. In particular, it provides a natural extension of the HOMFLYPT-type invariant \cite{HOMFLY1985} to the setting of pseudo links.
\end{remark}


\section{Conclusion and further directions}\label{concl}

In this paper, we introduced a HOMFLYPT-type invariant for oriented pseudo links using a Hecke algebra framework. The construction is based on a resolution homomorphism from the pseudo Hecke algebra to the classical Hecke algebra, in which each pseudo generator is interpreted as a linear combination of a braid generator and its inverse. Composing this map with the Ocneanu trace and applying a suitable normalization yields an invariant of pseudo links that extends the classical HOMFLYPT polynomial.

A central feature of the construction is the pseudo skein relation
\[
\mathcal P(L_p)
=
\lambda_+ \mathcal P(L_+)
+
\lambda_- \mathcal P(L_-),
\]
which reflects the interpretation of pseudo crossings as unresolved over/under information. In contrast to the singular setting, where desingularization involves smoothings, the present framework resolves pseudo crossings into classical crossings, leading to a distinct skein-theoretic structure.

The invariant \(\mathcal P\) admits multiple equivalent descriptions: an algebraic formulation via Hecke algebras, a state-sum expansion over classical resolutions, and a skein-theoretic characterization. Together, these perspectives provide a unified understanding of pseudo link invariants and clarify their relationship with classical knot theory.

The construction also suggests several directions for further study. A natural direction is the extension of this framework to pseudo links in more general 3-manifolds using braid-theoretic techniques, following approaches such as \cite{Lambropoulou1999,DiamantisPseudoST}. In this context, skein-theoretic methods and braid representations provide a natural setting for further developments. Related directions include recent work on generalized braid structures, such as bonded braids \cite{DKL}, suggesting the existence of a broader algebraic framework encompassing pseudo, singular and other generalized knot theories.

From an algebraic viewpoint, it would be of independent interest to study the pseudo Hecke algebra \(PH_n(q)\) in greater depth. In particular, establishing a basis for \(PH_n(q)\), as conjectured in \cite{DiamantisPseudoST}, would be an important step toward constructing intrinsic Markov-type traces directly on \(PH_n(q)\), without passing through the classical Hecke algebra.


\begin{thebibliography}{99}

\bibitem{Alexander1923}
J. W. Alexander,
\newblock A lemma on systems of knotted curves,
\newblock \emph{Proc. Natl. Acad. Sci. USA}, 9 (1923), 93--95.

\bibitem{Baez1992}
J. C. Baez,
\newblock Link invariants of finite type and perturbation theory,
\newblock \emph{Lett. Math. Phys.}, 26 (1992), 43--51.

\bibitem{BardakovJablanWang2016}
V. G. Bardakov, S. Jablan, and H. Wang,
\newblock Monoid and group of pseudo braids,
\newblock \emph{J. Knot Theory Ramifications}, 25 (2016), 1641002.

\bibitem{Birman1993}
J. S. Birman,
\newblock New points of view in knot theory,
\newblock \emph{Bull. Amer. Math. Soc.}, 28 (1993), 253--287.

\bibitem{D}
I.~Diamantis,
\newblock Tied pseudo links \& pseudo knotoids,
\newblock \emph{Mediterr. J. Math.} \textbf{18} (2021), 201.

\bibitem{DiamantisPseudoST}
I. Diamantis,
\newblock Pseudo links and singular links in the solid torus,
\newblock \emph{Communications in Mathematics}, \textbf{31} (2023), 333--357.

\bibitem{DKL}
I.~Diamantis, L.~H.~Kauffman, and S.~Lambropoulou,
\newblock Topology and algebra of bonded knots and braids,
\newblock \emph{Mathematics} \textbf{13} (2025), 3260.

\bibitem{HOMFLY1985}
P. Freyd, D. Yetter, J. Hoste, W. B. R. Lickorish, K. Millett, and A. Ocneanu,
\newblock A new polynomial invariant of knots and links,
\newblock \emph{Bull. Amer. Math. Soc.} \textbf{12} (1985), 239--246.

\bibitem{Hanaki2010}
R. Hanaki,
\newblock Pseudo diagrams of knots, links and spatial graphs,
\newblock \emph{Osaka J. Math.}, 47 (2010), 863--883.

\bibitem{Henrich2013}
A. Henrich, R. Hoberg, S. Jablan, L. Johnson, E. Minten, and L. Radović,
\newblock The theory of pseudoknots,
\newblock \emph{J. Knot Theory Ramifications}, 22 (2013), 1350032.

\bibitem{HenrichKauffman2017}
A.~Henrich and L.~H.~Kauffman,
\newblock Tangle insertion invariants for pseudoknots, singular knots, and rigid vertex spatial graphs,
\newblock \emph{Contemp. Math.} \textbf{689} (2017), 177--189.

\bibitem{Jones1987}
V. F. R. Jones,
\newblock Hecke algebra representations of braid groups and link polynomials,
\newblock \emph{Ann. of Math.}, 126 (1987), 335--388.

\bibitem{Kauffman1988}
L. H. Kauffman,
\newblock New invariants in the theory of knots,
\newblock \emph{Amer. Math. Monthly}, 95 (1988), 195--242.

\bibitem{Lambropoulou1999}
S. Lambropoulou,
\newblock Knot theory related to generalized and cyclotomic Hecke algebras of type B,
\newblock \emph{J. Knot Theory Ramifications}, 8 (1999), 621--658.

\bibitem{Lambropoulou2007}
S. Lambropoulou,
\newblock L-moves and Markov theorems,
\newblock \emph{J. Knot Theory Ramifications}, 16 (2007), 1459--1468.

\bibitem{Markov1936}
A.~A.~Markov,
\newblock Über die freie Äquivalenz der geschlossenen Zöpfe,
\newblock \emph{Rec. Math. [Mat. Sbornik] N.S.} \textbf{1(43)} (1936), 73--78.

\bibitem{ParisRabenda2008}
L. Paris and L. Rabenda,
\newblock Singular Hecke algebras, Markov traces and HOMFLY-type invariants,
\newblock \emph{Ann. Inst. Fourier}, 58 (2008), 2413--2443.

\end{thebibliography}
\end{document}